\documentclass[a4paper,11pt]{article}

\usepackage[top=3.0cm, bottom=3.0cm, inner=3.0cm, outer=3.0cm,
includefoot]{geometry}

\usepackage{verbatim}
\usepackage{caption}
\usepackage{url}
\usepackage{amsmath}
\usepackage{geometry}
\usepackage{amssymb}
\usepackage{amsmath}
\usepackage{graphicx}
\usepackage{amsthm}
\usepackage{bbm}
\usepackage{float}
\usepackage{color,soul}
\usepackage{hyperref}
\usepackage[T1]{fontenc}
\usepackage[utf8]{inputenc}
\usepackage{authblk}
\usepackage{booktabs}
\usepackage{longtable}
\usepackage{enumerate}
\usepackage{tikz}
\usetikzlibrary{arrows}
\usetikzlibrary{decorations.markings}
\usepackage{standalone}
\usepackage{indentfirst}
\newcommand{\eChar}{\begin{enumerate}[(i)]}
\newcommand{\eCharR}{\begin{enumerate}[(a)]}
\newcommand{\eBr}{\begin{enumerate}[(1)]}

\title
{
Criteria on forbidden subgraphs in the complements for positive Lin--Lu--Yau curvature
}

\author[1]{Kaizhe Chen\thanks{Email: ckz22000259@mail.ustc.edu.cn}}
\author[2]{Shiping Liu\thanks{Email: spliu@ustc.edu.cn}}
\author[3]{Zhe You\thanks{Email: y30231280@mail.ecust.edu.cn}}
\affil[1]{School of the Gifted Young, University of Science and Technology of China}
\affil[2]{School of Mathematical Sciences, University of Science and Technology of China}
\affil[3]{School of Mathematics, East China University of Science and Technology}
\date{}

\theoremstyle{plain}
\newtheorem{lemma}{Lemma}[section]
\newtheorem{theorem}[lemma]{Theorem}

\newtheorem{corollary}[lemma]{Corollary}

\theoremstyle{definition}

\newtheorem{claim}[lemma]{Claim}

\newtheorem{definition}[lemma]{Definition}

\numberwithin{equation}{section}

\begin{document}

\maketitle

\begin{abstract}
We investigate forbidden subgraph conditions in the complement of a graph that guarantee positive Lin--Lu--Yau curvature. In particular, we prove that every graph whose complement contains no $4$-cycles has positive Lin--Lu--Yau curvature, with the only exception of the $4$-vertex path. 
We further prove that, for any integer $t\ge2$, every graph on at least $\max\{t^2-2t+2, 8t\}$ vertices whose complement contains no $K_{2,t}$ has positive curvature. 
In addition, this lower bound on the number of vertices is optimal for $t\geq 10$. 
Finally, we construct examples showing that, in general, the forbidden subgraphs in these results cannot be replaced by cycles of length other than $4$ or by complete bipartite graphs $K_{s,t}$ with $s> 2$ and $t> 2$.
\end{abstract}

\textbf{Keywords:} forbidden subgraphs, Lin--Lu--Yau curvature, complement graphs

\textbf{Mathematics Subject Classification:} 53A70, 05C99

\section{Introduction}

The local-to-global principle is a fundamental method for analyzing the structure of spaces. In recent years, this perspective has gained significant attention in the setting of discrete spaces, especially graphs, through the development of discrete notions of Ricci curvature \cite{NR17}. Among these, a particularly influential notion is  originally proposed by Ollivier \cite{O09} and later modified by Lin--Lu--Yau \cite{LLY11}. Ollivier's coarse Ricci curvature, along with the closely related Lin--Lu--Yau curvature, is defined for pairs of vertices using the optimal transport distance between their respective neighborhoods. 

In this paper, we establish criteria for deciding whether a graph has positive Lin--Lu--Yau curvature. 
These criteria are formulated in terms of forbidding certain subgraphs from the complement graph. 
Previous criteria for positive Lin--Lu--Yau curvature, stated in terms of minimum vertex degree or vertex connectivity, were obtained by Hehl \cite{Hehl24, Hehl25} and by the authors \cite{CLY}. 
We note that a graph has positive Lin--Lu--Yau curvature if and only if it has positive Ollivier's coarse Ricci curvature with idleness parameter $p\in [1/2, 1)$  (see \cite{BCLMP18} or~\eqref{eq:bourne} below). 
Moreover, Lin--Lu--Yau curvature can be interpreted as a coupling condition arising in the analysis of convergence for the associated time-discrete Markov chains \cite{O09}, and is also related to the time-continuous counterpart of Ollivier's coarse curvature and corresponding convergence properties of the associated time-continuous Markov process \cite[Section 5]{MW19}.

Our work is partly motivated by the results of Cushing--Stone \cite[Theorem 1.3]{CS24} and Huang--He--Zhang \cite[Theorem 1]{HHZ25}. 
Their results can be reformulated as follows: A graph on $n\geq 3$ vertices has Lin--Lu--Yau curvature at least $1$ if and only if it can be obtained from the complete graph $K_n$ by removing a matching. 
Note that a matching is the simplest kind of forest. This naturally leads to the question: does deleting any forest, or more generally any $C_4$-free graph, from a complete graph still guarantee positive curvature?
The following theorem shows that the only counterexample is the 4-vertex path.

\begin{theorem}\label{main}
    Let $G$ be a graph such that the complement graph of $G$ has no cycles of length $4$. Then $G$ has positive Lin--Lu--Yau curvature unless $G$ is a path on $4$ vertices.
\end{theorem}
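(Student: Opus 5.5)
The plan is to work edge by edge and show $\kappa(x,y)>0$ for every edge $xy$ of $G$; by the standard fact that positivity on edges gives positivity on all pairs, this suffices. We may assume $G$ is connected and has at least three vertices, checking small and degenerate cases directly. If $G$ is disconnected, its complement contains a complete bipartite graph $K_{a,b}$, and $C_4$-freeness forces $\min\{a,b\}=1$, so these cases are very restricted. For $\kappa(x,y)$ we use the characterization via~\eqref{eq:bourne}: it suffices to bound the Wasserstein distance between the lazy measures $\mu_x^{p}$ and $\mu_y^{p}$ with $p=1/2$ (or a suitable $p\in[1/2,1)$) strictly below $1$. This is done by exhibiting an explicit transport plan.

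The key structural input is the $C_4$-freeness of $\overline{G}$. It says that any two distinct vertices of $G$ have at most one common non-neighbour in $G$. Write $A=N(x)\setminus(N(y)\cup\{y\})$, $B=N(y)\setminus(N(x)\cup\{x\})$ and $C=N(x)\cap N(y)$. Each $v\in B$ is a non-neighbour of $x$. Hence for $u\in A$, any $v\in B$ with $uv\notin E(G)$ is a common non-neighbour of $u$ and $x$, so $u$ has at most one non-neighbour in $B$. Symmetrically, each $v\in B$ has at most one non-neighbour in $A$. So $G[A,B]$ is a complete bipartite graph minus a matching. By Hall's theorem (or directly), it contains a matching of size $\min\{|A|,|B|\}$, except when $|A|=|B|=1$ and the unique pair is a non-edge.

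The transport plan, assuming $d_x\le d_y$, is as follows.
\begin{enumerate}[(1)]
\item Mass on $C$ stays put.
\item Mass on $A$ moves along the matching to $B$ at cost $1$ per unit.
\item The laziness mass at $x$ and $y$ is exchanged in the usual way, which produces the saving.
\item Surplus mass arising from $d_x<d_y$ is supplied from $x$ or $y$ at distance at most $1$, since $B\subseteq N(y)$.
\end{enumerate}
The computation should give $W_1(\mu_x,\mu_y)\le 1-\tfrac{c}{d_y}$ or similar, hence $\kappa(x,y)>0$. In the unbalanced case I expect to phrase this through the limit-free Münch--Wojciechowski formula if the bookkeeping is cleaner that way.

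I expect the main obstacle to be the exceptional configuration $A=\{u\}$, $B=\{v\}$ with $uv\notin E(G)$. Here the matching argument fails and $u$–$v$ has distance up to $3$. I would first try to reroute through some $w$ adjacent to both $u$ and $v$, or use the mass on $C$. In $\overline{G}$ the vertex $u$ is adjacent to $y$ and $v$, and $v$ is adjacent to $x$ and $u$. Hence any further vertex $w$ nonadjacent in $G$ to two vertices among $x,y,u,v$ in a way that closes a $4$-cycle is excluded. Every other vertex $w$ is therefore adjacent to most of $\{x,y,u,v\}$, which yields a cheap transport. Combining this with the case $C\ne\emptyset$ (common neighbours absorb cost) should force $V(G)=\{u,x,y,v\}$ with $G=P_4$ as the only failure. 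Finally, I would verify directly that $P_4$ has zero curvature on its middle edge.
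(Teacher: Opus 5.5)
Your strategy differs from the paper's. The paper argues by contradiction from the simple optimal plan of Lemma~\ref{simple}, and uses the exchange inequality of Lemma~\ref{adjust} together with $C_4$-freeness to show that only one vertex of $A_x$ ships mass over distance $\ge 2$. You instead build an explicit coupling. Your balanced case $d_x=d_y$ is fine, and so is your treatment of the $P_4$-type configuration.

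The unbalanced case $d_x<d_y$ is where the real bookkeeping lies, and it is not handled:
\begin{itemize}
\item Steps (1)--(2) do not form a coupling. Each $c\in C$ carries $\frac{1-p}{d_x}>\frac{1-p}{d_y}$, so not all of it can stay put. Each $a\in A$ carries more than its matched partner needs.
\item Step (4) is false. $y$ is a sink with no surplus, and $x$ is at distance $2$ from $B$.
\end{itemize}
So the leftovers $(1-p)(\frac1{d_x}-\frac1{d_y})$ at $x$ and at every vertex of $A\cup C$ must be shipped into $B$. This cannot in general be done at distance $1$, and the obstruction is not confined to $|A|=|B|=1$.

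For instance, let $G$ have edges $xy,xa,ab_1,b_1y,yb_0$. Then $\overline{G}$ is $C_4$-free, $A=\{a\}$, $B=\{b_0,b_1\}$, $C=\emptyset$, $d_x=2$ and $d_y=3$. Here $a$ carries $\frac{1-p}{2}$, while its only neighbour in $B$ absorbs $\frac{1-p}{3}$. Moving all non-lazy mass over distance $1$, as your plan claims, would give $\kappa_{LLY}(x,y)\ge\frac1{d_x}+\frac{|C|+1}{d_y}=\frac56$. But the function $f=(0,1,-1,0,2)$ on $(x,y,a,b_1,b_0)$ in Theorem~\ref{Curvature via the Laplacian} gives $\kappa_{LLY}(x,y)\le\frac13$, with equality in fact. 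So your expected estimate $W\le 1-c/d_y$ is unsupported as stated.

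The gap can be closed within your framework.
\begin{itemize}
\item First, every vertex of $A\cup C$, not only of $A$, has at most one non-neighbour in $B$. Two non-neighbours $b_1,b_2$ of some $c\in C$ would give the cycle $c\,b_1\,x\,b_2$ in $\overline G$. Hence $|N_B(S)|\ge|B|-1$ for every nonempty $S\subseteq A\cup C$, with equality only if all of $S$ misses a common $b_0$.
\item Send $p-\frac{1-p}{d_x}$ from $x$ to $y$, and the rest of $x$'s excess to $B$ at distance $2$.
\item Route as much as possible of the mass on $A\cup C$ along edges into $B$. Its total is $(1-p)(\frac{|B|+1}{d_y}-\frac1{d_x})$.
\item By the supply--demand form of Hall's theorem (equivalently, max-flow/min-cut), the unroutable part is at most $(1-p)\max\{0,\frac2{d_y}-\frac1{d_x}\}$. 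Ship it over distance $\le3$.
\end{itemize}
This yields
\[
\kappa_{LLY}(x,y)\ \ge\ \min\Big\{\frac{|C|+2}{d_y},\ \frac{|C|-2}{d_y}+\frac{2}{d_x}\Big\},
\]
which is positive unless $C=\emptyset$ and $d_x=d_y$. In that remaining case, a positive deficiency forces $A=\{u\}$, $B=\{v\}$ with $uv\notin E$, which is exactly your exceptional configuration. Your argument there then finishes the proof: every further vertex lies in $R_{xy}$ and is adjacent to both $u$ and $v$.

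Completed this way, your route is a constructive alternative to the paper's proof. It gives an explicit lower bound on the curvature rather than a contradiction argument on an optimal plan.
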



It is worth noting that a graph with no cycles of length $4$ is generally expected to have non-positive curvature at some pair of vertices. 
In fact, Lin and the third named author \cite{LinYou24} classified all positively curved graphs with no $4$-cycles and minimum vertex degree at least $2$, showing that there are only $6$ such graphs. Theorem \ref{main} may be viewed as a complementary result to theirs: Forbidding $4$-cycles in the complement leads to opposite curvature phenomenon.

We observe that Theorem \ref{main} does not extend to the case where one forbids cycles of length $k$ in the complement graph for $k=3$ or any $k>4$, see Section \ref{section:sharpness}.

On the other hand, a $4$-cycle is the complete bipartite graph $K_{2,2}$. 
If instead we forbid $K_{2,t}$ for $t\geq 2$ in the complement, we obtain the following result. 

\begin{theorem}\label{main2}
    Let $t\ge 2$ be an integer.
    Let $G$ be a graph on at least $\max\{t^2-2t +2,8t\}$ vertices such that the complement graph of $G$ contains no $K_{2,t}$. 
    Then $G$ has positive Lin--Lu--Yau curvature.
\end{theorem}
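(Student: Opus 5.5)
We argue edge by edge. Lin--Lu--Yau curvature is positive everywhere once it is positive on every edge, since for any pair $x,y$ one can sum along a geodesic and use the triangle inequality for $W_1$. So fix an edge $xy$ of $G$ and write $H=\overline G$. The basic fact is that two vertices of $G$ have at most $t-1$ common neighbours in $H$, since otherwise $H$ contains a $K_{2,t}$. Hence $x$ and $y$ have at most $t-1$ common non-neighbours in $G$. This gives
\[
|N(x)\cup N(y)|\ge n-t+1 .
\]
We use the limit-free formulation of the curvature (Münch--Wojciechowski, or the Bourne et al. form \eqref{eq:bourne}). With $d_x\le d_y$, it suffices to exhibit a transport plan from $m_x$ to $m_y$, with idleness $1/2$, whose cost is strictly less than $1$.

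\textbf{Structure around $xy$.} Put $\Delta=N(x)\cap N(y)$, $A=N(x)\setminus(N(y)\cup\{y\})$ and $B=N(y)\setminus(N(x)\cup\{x\})$. Mass on $\Delta$ stays in place. We then want to match $A$ into $B\cup\Delta\cup\{y\}$ at distance at most $1$ as far as possible. The mass that cannot be matched this way moves at distance $2$ or $3$, and that loss is what must be controlled.

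Both of the following hold in $G$:
\begin{itemize}
\item a vertex $a\in A$ has at most $t-1$ non-neighbours in $B$ other than those forced through $y$; more precisely, any two vertices of $A$ have at most $t-1$ common non-neighbours in $B$;
\item $|A|,|B|$ together with the non-edges inside $A\times B$ form a $K_{2,t}$-free bipartite graph in $H$.
\end{itemize}
By Kővári--Sós--Turán-type counting, the non-edges between $A$ and $B$ then number at most about $\tfrac12|B|\sqrt{(t-1)|A|}+|A|$. A Hall/König argument then produces a matching of $A$ into $B$ that covers all but $O(t)$ vertices of $A$ whenever $|A|,|B|$ are large relative to $t$.

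\textbf{Case split.} We split according to the size of $|A|$ compared with $|\Delta|$.

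\emph{Case 1: $|A|\le t-1$.} Few vertices have to move far. Since $d_x\ge n-1-|{\rm non\mbox{-}nbrs}|$ and $x$ has few non-neighbours relative to the large common neighbourhood, the at most $t-1$ unmatched units cost $O(t)/d_x$. The gain from $\Delta$ and the idle mass at $x,y$ compensates. Here we use $n\ge 8t$, which makes $d_x$ at least roughly $n-t$... Actually $d_x$ could be small, so instead we use that every vertex outside $N[x]$ has at most $t-1$ common $H$-neighbours with $x$. This bounds how many vertices of $A$ can be non-adjacent to a vertex of $B$.

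\emph{Case 2: $|A|$ large.} Hall's condition holds from the counting above. Every $a\in A$ is matched to a distinct $b\in B$ at distance $1$, except at most $t-1$ of them, and those reach $B$ at distance $2$ via $\Delta\cup\{y\}$.

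\textbf{Degenerate situation and main obstacle.} One situation needs separate treatment: $d_x$ small, with $x$ having many non-neighbours. Then $x$ has huge degree in $H$. Any two of its $H$-neighbours share at most $t-1$ further common $H$-neighbours. In $G$ this means the non-neighbourhood of $x$ is nearly a clique joined almost completely to $N(x)$, which forces $N(y)$ to cover $N(x)$ well.

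This is exactly where the bound $n\ge t^2-2t+2$ enters. It comes from counting pairs: if $x$ has $k$ non-neighbours and $N(x)$ has $d$ vertices, each vertex of $N(x)\setminus\{y\}$ must be adjacent to $y$ or be rescued by a vertex at distance $1$ from $y$. The $K_{2,t}$-freeness bounds the bad configurations by $(t-1)^2$, and $n-1\ge (t-1)^2$ is what leaves at least one unit of strictly profitable transport. I expect this step, making the small-degree case quantitatively work with the sharp threshold $\max\{t^2-2t+2,8t\}$, to be the main obstacle. I would first try a double-counting of triples $(u,v,w)$ with $uv,uw\in E(H)$, and only then tune the transport plan to that count.
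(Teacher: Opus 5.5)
Your reductions to edges and to idleness $1/2$ via \eqref{eq:bourne} are fine. Beyond that, the proposal is an outline whose decisive step is missing, so it does not yet prove the theorem.

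\textbf{Where the argument stops.} Case~1 breaks off (``Actually $d_x$ could be small\dots'') without an argument. The regime where the lower-degree endpoint has few neighbours is explicitly deferred as ``the main obstacle''. That regime is not a side case but the whole difficulty. In the sharp example of Section~\ref{section:sharpness} ($a=c=t-2$, $d=t-1$, $n\le t^2-2t+1$), the lower-degree endpoint has degree $t-1$ and only $t-2$ private neighbours, so it lies in your Case~1. Moreover, under $K_{2,t}$-freeness every nonpositively curved edge has at most $2t$ private neighbours at its lower-degree endpoint (Claim~\ref{Ay}), so your Case~2 is the easy regime.

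\textbf{Case~2 does not close either.}
\begin{enumerate}[(1)]
\item The masses on $A$ and $B$ differ ($\frac{1-p}{d_x}$ versus $\frac{1-p}{d_y}$), so an injective matching is not a transport plan.
\item Grant a plan in which everything moves at distance $1$ except the unavoidable moves and $k$ vertices of $A$ moved at distance $2$. In your convention $d_x\le d_y$ (cf.\ Corollary~\ref{basic}) it only certifies $\kappa_{LLY}(x,y)\ge \frac{|N_{xy}|+2}{d_y}-\frac{k}{d_x}$. This is negative as soon as $k>|N_{xy}|+2$, and you allow $k$ up to $t-1$.
\item You never exclude distance-$3$ moves between $A$ and $B$. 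Excluding them is exactly what $n\ge 8t$ is used for in the paper (Claim~\ref{Claim3}); your outline gives $8t$ no concrete role.
\end{enumerate}

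\textbf{Minor points.} K\H{o}v\'ari--S\'os--Tur\'an is unnecessary. Pairing a vertex with $y$ (resp.\ $x$) shows directly that it has at most $t-1$ non-neighbours in $A\cup R_{xy}$ (resp.\ $B\cup R_{xy}$). Also, two $\overline G$-neighbours of $x$ share at most $t-2$ common $\overline G$-neighbours besides $x$, not $t-1$. This off-by-one matters for the sharp threshold.

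\textbf{What is missing.} You need structural information extracted from an \emph{optimal} plan rather than an explicitly built one. The paper argues by contradiction with $d_x\ge d_y$ and an optimal simple plan $\pi$ with $\pi(x,y)=p-\frac{1-p}{d_y}$ (Lemma~\ref{simple}).
\begin{enumerate}[(1)]
\item Nonpositive curvature and Lemma~\ref{m2} force the distance-$2$ mass to satisfy $m_2\ge\frac{1-p}{d_y}+\frac{(|N_{xy}|+1)(1-p)}{d_x}$.
\item Let $X_1$ be the sources of distance-$2$ moves and $X_2$ the vertices sending mass into $Y_1=N_{N_{xy}\cup A_y}(X_1)$. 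Exchange arguments (Lemma~\ref{adjust} and a three-term cyclic version) show that no vertex of $X_1\cup X_2$ is adjacent to a receiver $w_1\in N_{xy}\cup A_y$ of a distance-$2$ move. Hence $|X_1\cup X_2|\le t-1$.
\item Mass counting then gives $d_x\le\frac{(t-2)d_y}{|Y_1|+1}$.
\item $K_{2,t}$-freeness applied to two vertices of $X_1$ gives $d_y\le |Y_1|+t-1$.
\item From these one derives $Y_1=\emptyset$ and $|A_y|\ge 2$, hence $|X_1|\le t-2$.
\item So $d_x\le (t-|N_{xy}|-3)d_y$ with $d_y\le t-1$, and Claim~\ref{sum} yields $n\le (t-2)d_y+t-1\le t^2-2t+1$.
\end{enumerate}
Your heuristic that bad configurations are bounded by $(t-1)^2$ points toward this product. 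However, the product structure comes from the support of an optimal plan, and a purely combinatorial count of triples in $\overline G$ does not connect to the transport cost. Without an argument of this kind, the threshold $t^2-2t+2$ is not reached.
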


We emphasize that the condition on the graph size is essential. In fact, the lower bound $n\ge t^2-2t +2$ is sharp for $t\ge 10$, as shown in Section \ref{section:sharpness}.

A natural question is whether $K_{2,t}$ in Theorem \ref{main2} can be replaced by other complete bipartite graphs $K_{s,t}$ with $s>2$ and $t> 2$. However, the examples provided in Section \ref{section:sharpness} show that this is not possible.

Throughout the paper, we use the following notation. 
Let $G=(V,E)$ be a simple finite graph.
Denote by $\overline{G}$ the complement graph of $G$.
For any $x\in V$ and $A\subseteq V$, let $N_A(x)$ be the set of neighbors of $x$ in $A$ and let $d_x:=|N_V(x)|$ be its degree. 
A vertex $y\ne x$ which is not adjacent to $x$ is called a non-neighbor of $x$.
For any $S\subseteq V$, define $N_A(S):=\cup_{v\in S}N_A(v)$.
For any two vertices $x$ and $y$, we denote the distance between them by $\rho(x,y)$. 
Let $\delta_{xy}: V\times V\to [0,1]$ denote the function defined as follows:
    \[\delta_{xy}(a,b)=\left\{
                \begin{array}{ll}
                    1, & \hbox{if $a=x$, $b=y$;}\\
                    0, & \hbox{otherwise.}
                \end{array}
                \right.
    \]
For two fixed vertices $x$ and $y$, we set $N_{xy}:=N_V(x)\cap N_V(y)$, $A^{(xy)}_x:=N_V(x)\backslash (N_{xy}\cup \{ y \})$, $A^{(xy)}_y:=N_V(y)\backslash (N_{xy}\cup \{ x \})$, and $R_{xy}:=V\backslash (N_V(x)\cup N_V(y))$. 
For simplicity, we write $A_x$ for $A^{(xy)}_x$ and write $A_y$ for $A^{(xy)}_y$ hereafter.

\section{Preliminaries}
\subsection{Lin--Lu--Yau curvature}
Before we introduce the Lin--Lu--Yau curvature, we recall the definition of Wasserstein distance first.
\begin{definition}[Wasserstein distance]
     Let $G=(V,E)$ be a locally finite graph, $\mu_1$ and $\mu_2$ be two probability measures on $G$. The {\it Wasserstein distance} $W(\mu_1, \mu_2)$ between $\mu_1$ and $\mu_2$ is defined as
     \begin{align}\label{defi}
         W(\mu_1,\mu_2)=\inf_{\pi}\sum_{u\in V}\sum_{v\in V}\rho(u,v)\pi(u,v),
     \end{align}
     where the infimum is taken over all the mappings $\pi: V\times V\to [0,1]$ satisfying
     $$\mu_1(u)=\sum\limits_{v\in V}\pi(u,v) \text{ for any}\ u\in V$$
     and
     $$\mu_2(v)=\sum\limits_{u\in V}\pi(u,v) \text{ for any}\ v\in V.$$ 
     Such a mapping is called a {\it transport plan} from $\mu_1$ to $\mu_2$. 
     A transport plan that attains the infimum in \eqref{defi} is called {\it optimal}.
     We call a transport plan {\it simple} if, for each vertex $u\in V$, we have
     $$\pi(u,u)=\min\{ \mu_1(u), \mu_2(u)\}.$$
\end{definition}
 Here, for a given idleness parameter $p\in [0,1]$, we consider the particular measure around a vertex $x\in V$ as follows:
    \[\mu_x^p(y)=\left\{
                    \begin{array}{ll}
                      p, & \hbox{if $y=x$;} \\
                      \frac{1-p}{d_x}, & \hbox{if $xy\in E$;} \\
                      0, & \hbox{otherwise.}
                    \end{array}
                  \right.
     \]
     
   Based on the probability measure above, two kinds of Ricci curvature on graphs are defined as follows.
\begin{definition}[$p$-Ollivier curvature and Lin--Lu--Yau curvature] 
Let $G=(V,E)$ be a locally finite graph. 
For any vertices $x\neq y\in V(G)$, the {\it $p$-Ollivier curvature} $\kappa_p(x,y)$, $p\in [0,1]$, is defined as
     \[\kappa_p(x,y)=1-\frac{W(\mu_x^p,\mu_y^p)}{\rho(x,y)}.\]
The {\it Lin--Lu--Yau curvature} $\kappa_{LLY}(x,y)$ is defined as
     \[\kappa_{LLY}(x,y)=\lim_{p\to 1}\frac{\kappa_p(x,y)}{1-p}.\]
\end{definition}
The ratio $\kappa_p(x,y)/(1-p)$ is constant when $p$ is large enough. 
Indeed, it was proved in \cite{BCLMP18} that 
\begin{equation}\label{eq:bourne}
    \kappa_{LLY}(x,y)=\frac{\kappa_p(x,y)}{1-p} \,\,\text{for any $p \in \left[\frac{1}{\max\{d_x,d_y\}+1},1\right]$}.
\end{equation}

For any locally finite graph $G$, the  normalized graph Laplacian $\Delta$ is defined as
$$\Delta f(x):=\frac{1}{d_x} \sum_{y: xy\in E(G)}(f(y)-f(x)), \text{ for any $f: V(G)\to \mathbb{R}$ and any $x\in V(G)$}.$$
There is another limit-free formulation of Lin--Lu--Yau curvature, which was given by M\"{u}nch and Wojciechowski \cite{MW19}. 
\begin{theorem}[Curvature via the Laplacian {\cite[Corollary 2.2]{MW19}}]\label{Curvature via the Laplacian}
    Let $G$ be a locally finite graph and let $xy$ be an edge. Then
    $$\kappa_{LLY}(x, y)=\inf _{\substack{f:N_V(x)\cup N_V(y)\to \mathbb{Z}\\f \in Lip(1) \\ f(y)-f(x)=1}} \left(\Delta f(x)-\Delta f(y)\right).$$
\end{theorem}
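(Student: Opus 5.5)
The plan is to derive the formula from Kantorovich duality for the $p$-Ollivier curvature and then let $p\to 1$. The two delicate points are restricting to integer-valued test functions and forcing $f(y)-f(x)=1$.

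\emph{Step 1: duality.} Since $xy$ is an edge, $\rho(x,y)=1$. Both measures are supported on $B:=N_V(x)\cup N_V(y)$, which contains $x$ and $y$. Kantorovich--Rubinstein duality on this finite set gives
$$W(\mu_x^p,\mu_y^p)=\sup_{f\in Lip(1)}\Big(\sum_v f(v)\mu_y^p(v)-\sum_v f(v)\mu_x^p(v)\Big).$$
Here it suffices to take $f$ defined on $B$, because a $1$-Lipschitz function on $B$ (with respect to $\rho$) extends to $V$ by the McShane formula. A direct computation gives $\sum_v f(v)\mu_x^p(v)=f(x)+(1-p)\Delta f(x)$. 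Hence
$$\kappa_p(x,y)=\inf_{f\in Lip(1)}\Big(1-\big(f(y)-f(x)\big)+(1-p)\big(\Delta f(x)-\Delta f(y)\big)\Big).$$

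\emph{Step 2: integrality.} The expression is invariant under adding constants, so I normalize $f(x)=0$. The objective is linear in $f$. The feasible set $\{f:B\to\mathbb R:\ |f(u)-f(v)|\le \rho(u,v),\ f(x)=0\}$ is a polytope: it is bounded because $|f(v)|\le\rho(v,x)\le 3$ on $B$. Its constraint matrix is a difference-constraint (network) matrix, which is totally unimodular, and the right-hand sides $\rho(u,v)$ are integers. So every vertex of the polytope is integer-valued, and the infimum is attained at an integer-valued $f:B\to\mathbb Z$.

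\emph{Step 3: forcing $f(y)-f(x)=1$.} For integer $f\in Lip(1)$ we have $f(y)-f(x)\in\{-1,0,1\}$. The $1$-Lipschitz property gives $|\Delta f(x)|\le 1$ and $|\Delta f(y)|\le 1$, so $|\Delta f(x)-\Delta f(y)|\le 2$.
\begin{itemize}
\item If $f(y)-f(x)\le 0$, the objective is at least $1-2(1-p)$.
\item If $f(y)-f(x)=1$, the objective is $(1-p)(\Delta f(x)-\Delta f(y))\le 2(1-p)$.
\end{itemize}
An admissible integer $f$ with gap $1$ always exists, for instance $f=\rho(x,\cdot)$ restricted to $B$. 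Therefore, once $p>3/4$ (so that $1-2(1-p)>2(1-p)$), the infimum is attained by a function with $f(y)-f(x)=1$. This gives
$$\frac{\kappa_p(x,y)}{1-p}=\inf_{\substack{f:B\to\mathbb Z,\ f\in Lip(1)\\ f(y)-f(x)=1}}\big(\Delta f(x)-\Delta f(y)\big).$$
The right-hand side does not depend on $p$, so letting $p\to1$ yields the claimed formula for $\kappa_{LLY}(x,y)$.

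I expect Step 2 to be the main obstacle. The rest of the argument only needs the infimum to be attained at functions whose values are separated by a definite gap. I would justify this either by total unimodularity, since each constraint row has one $+1$ and one $-1$, or more directly by rounding: for any real optimal $f$ and a uniformly random threshold $\theta$, the function $\lfloor f-\theta\rfloor$ is still $1$-Lipschitz with integer values. In expectation it reproduces the linear objective, so some integer $f$ does at least as well.
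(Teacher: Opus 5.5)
Your proof is correct and complete. Note that the paper gives no proof of this statement: it quotes it as a black box from M\"{u}nch--Wojciechowski, who work in the broader framework of general graph Laplacians. So the useful comparison is between your self-contained argument and relying on that reference.

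Your argument reduces everything to a finite linear program with integer data, in three steps.
\begin{enumerate}
\item Kantorovich--Rubinstein duality on $(B,\rho|_B)$. Here $\rho$ is the distance of $G$, not of the subgraph induced on $B$; that is the right reading of $Lip(1)$.
\item Integrality of the polytope $\{f:\ f(u)-f(v)\le\rho(u,v),\ f(x)=0\}$. Your total unimodularity argument works. So does your threshold rounding: $\lfloor a-\theta\rfloor$ averages to $a-1$ over uniform $\theta\in[0,1)$, and the objective is invariant under constants.
\item The discreteness $f(y)-f(x)\in\{-1,0,1\}$. This separates functions with gap $\le 0$ (objective $\ge 1-2(1-p)$) from those with gap $1$ (objective $\le 2(1-p)$, witnessed by $\rho(x,\cdot)$).
\end{enumerate}
Doing integrality before the gap comparison is what makes the case split clean. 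With real-valued $f$ the gap could take any value in $[-1,1]$.

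A by-product is that $\kappa_p(x,y)/(1-p)$ is exactly constant for $p\in(3/4,1)$. This is a weaker form of \eqref{eq:bourne}, which the paper imports separately and which holds for all $p\ge 1/(\max\{d_x,d_y\}+1)$. The range is smaller, but it suffices for the limit.

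What your route does not give is the generality of M\"{u}nch--Wojciechowski: general weights, measures and distances, where the problem is no longer a finite integer LP. For the locally finite, combinatorial-distance setting used in this paper, it is a complete replacement.

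One cosmetic point: on $B$ you actually have $|f(v)|\le\rho(v,x)\le 2$, not $3$.
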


The following lemma plays an important role in our proof.
\begin{lemma}[{\cite[Lemma 4.1]{CLY}}]\label{simple}
    Let $G$ be a graph. 
    Let $x$ and $y$ be two adjacent vertices in $G$ with $d_x\ge d_y$. 
    For any $p\in \left[ \frac{1}{1+d_y},1 \right]$, there is a simple optimal transport plan $\pi$ from $\mu_x^p$ to $\mu_y^p$ such that $$\pi(x,y)=p-\frac{1-p}{d_y}.$$ 
\end{lemma}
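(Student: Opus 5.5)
The plan is to start from an arbitrary optimal transport plan, first make it simple without raising the cost, and then push as much of the excess mass at $x$ as possible directly onto $y$, again without raising the cost. Write $\mu_x=\mu_x^p$ and $\mu_y=\mu_y^p$.

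\textbf{Step 1: the relevant quantities.} The hypothesis $p\ge \frac{1}{1+d_y}$ gives $pd_y\ge 1-p$, so $\mu_y(x)=\frac{1-p}{d_y}\le p=\mu_x(x)$. Also $d_x\ge d_y$ and $p\ge\frac{1}{1+d_x}$, so $\mu_x(y)=\frac{1-p}{d_x}\le p=\mu_y(y)$. For a common neighbor $z\in N_{xy}$ we have $\mu_x(z)=\frac{1-p}{d_x}\le\frac{1-p}{d_y}=\mu_y(z)$. Hence in any simple plan, the excess mass leaving $x$ is exactly $p-\frac{1-p}{d_y}$. It follows that $\pi(x,y)\le p-\frac{1-p}{d_y}$, and the target value is the largest possible. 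Moreover, in a simple plan no vertex of $N_{xy}\cup\{y\}$ sends mass off the diagonal, since each such vertex has $\mu_x\le\mu_y$ there. So every off-diagonal transport into $y$ comes from $x$ or from $A_x$.

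\textbf{Step 2: simple optimal plans exist.} Take any optimal plan $\pi$; one exists by compactness of the set of plans. Suppose $\pi(u,u)<\min\{\mu_x(u),\mu_y(u)\}$ for some $u$. Then there exist $v\ne u$ with $\pi(u,v)>0$ and $w\ne u$ with $\pi(w,u)>0$. Move $\varepsilon=\min\{\pi(u,v),\pi(w,u)\}$ of mass by routing $w\to v$ and $u\to u$ instead. The cost change is $\varepsilon(\rho(w,v)-\rho(w,u)-\rho(u,v))\le 0$ by the triangle inequality. Each move either fixes $u$ or strictly decreases the number of off-diagonal positive entries involving $u$, and it does not spoil diagonals already fixed. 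So finitely many moves produce a simple optimal plan.

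\textbf{Step 3: exchange argument.} Among simple optimal plans, which form a compact set, choose $\pi$ maximizing $\pi(x,y)$, and suppose $\pi(x,y)<p-\frac{1-p}{d_y}$. Since $x$ has excess exactly $p-\frac{1-p}{d_y}$ to ship off the diagonal, there is $w\ne x,y$ with $\pi(x,w)>0$; then $w\in N_V(y)$. Also, $y$ must still receive more than $\pi(x,y)$ off-diagonal, because its deficit is $p-\frac{1-p}{d_x}\ge p-\frac{1-p}{d_y}$. So there is $u\ne x,y$ with $\pi(u,y)>0$. By Step 1, $u\in A_x$, so $\rho(u,y)=2$.

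Now swap: replace $x\to w$ and $u\to y$ by $x\to y$ and $u\to w$, moving $\varepsilon=\min\{\pi(x,w),\pi(u,y)\}$. The diagonal is untouched, so the plan stays simple. The cost change is
\[
\varepsilon\big(\rho(x,y)+\rho(u,w)-\rho(x,w)-\rho(u,y)\big)\le \varepsilon\big(1+\rho(u,y)+\rho(y,w)-1-2\big)=0,
\]
using $\rho(x,w)\ge1$ (as $w\neq x$), $\rho(y,w)=1$ and $\rho(u,y)=2$. This yields an optimal simple plan with larger $\pi(x,y)$, contradicting maximality. Hence $\pi(x,y)=p-\frac{1-p}{d_y}$.

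\textbf{Main obstacle.} The delicate point is Step 3. A naive swap can increase the cost when both $u$ and $w$ are common neighbors of $x$ and $y$ at distance $2$ from each other. The key observation that rules this out is Step 1: in a simple plan, common neighbors never send mass, so $u\in A_x$ and $\rho(u,y)=2$.
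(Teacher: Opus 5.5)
Your strategy is sound, and Steps 1 and 2 are correct. In Step 3 the setup is also fine: you choose a simple optimal plan maximizing $\pi(x,y)$, find $w\neq x,y$ with $\pi(x,w)>0$ and $u\neq x,y$ with $\pi(u,y)>0$, and deduce $u\in A_x$ from simplicity. Two small justifications should be added. First, $w\in N_V(y)$ follows directly from $\mu_y(w)\ge\pi(x,w)>0$, not from Step 1. Second, $u\neq w$ because $A_x\cap N_V(y)=\emptyset$, and this is what guarantees the swap leaves the diagonal alone.

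The genuine problem is the one estimate the argument hinges on. With $\rho(u,y)=2$ and $\rho(y,w)=1$, the middle expression in your display is $1+2+1-1-2=1$, not $0$. So as written you have only shown that the swap raises the cost by at most $\varepsilon$, which gives no contradiction. This is a wrong choice of triangle inequality, not just an arithmetic slip. Bounding $\rho(u,w)$ through $y$ gives $\rho(u,w)\le 3$, which is compensated only when $\rho(x,w)=2$, i.e.\ $w\in A_y$. When $w\in N_{xy}$, which is forced whenever $A_y=\emptyset$, your bound leaves a net $+\varepsilon$.

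The fix is to route through $x$ instead. Since $u\in A_x$ is adjacent to $x$, we have $\rho(u,w)\le \rho(u,x)+\rho(x,w)=1+\rho(x,w)$, hence
\[
\rho(x,y)+\rho(u,w)-\rho(x,w)-\rho(u,y)\le 1+\bigl(1+\rho(x,w)\bigr)-\rho(x,w)-2=0 .
\]
The exchange therefore needs both facts about $u$: $\rho(u,x)=1$ and $\rho(u,y)=2$. Your ``main obstacle'' remark correctly isolates the second, but the displayed computation never uses the first. With this correction, the swapped plan is still optimal and simple with strictly larger $\pi(x,y)$, and your proof is complete. The paper does not prove this lemma; it imports it from the authors' earlier work, so there is no in-paper argument to compare with. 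Your simplify-then-exchange route is a natural way to establish it.
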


\subsection{Matching}
A {\it matching} in a graph is a set of pairwise non-adjacent edges. 
A matching $M$ is called {\it a matching of} $U\subseteq V$ if every vertex in $U$ is incident with an edge in $M$.
The following classical theorem (see, e.g., \cite[ Theorem 2.1.2]{Diestel}) is useful in our proof.
\begin{theorem}[Hall's Marriage Theorem] \label{Marriage}
    Let $G$ be a bipartite graph with partition $A\cup B$. 
    Then, $G$ contains a matching of $A$ if and only if $|N_B(S)|\geq |S| \,\,\text{for  all}\,\,S\subseteq A$.
\end{theorem}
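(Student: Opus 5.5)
The statement just above is Hall's Marriage Theorem (Theorem~\ref{Marriage}), a classical result which the paper only quotes. I would nevertheless prove it directly, by induction on $|A|$.

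Necessity is immediate. If $M$ is a matching of $A$, then the partners of the vertices in $S$ are $|S|$ distinct vertices of $N_B(S)$, so $|N_B(S)|\ge|S|$ for every $S\subseteq A$.

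For sufficiency, assume Hall's condition holds and induct on $|A|$; the cases $|A|\le 1$ are trivial. I would split according to whether the condition holds with slack on every proper subset.

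\emph{Case 1: strict inequality.} Suppose $|N_B(S)|\ge|S|+1$ for every nonempty proper subset $S\subsetneq A$. Pick any $a\in A$; it has some neighbor $b$, since $|N_B(\{a\})|\ge 1$. Delete $a$ and $b$. In the resulting bipartite graph with parts $A\setminus\{a\}$ and $B\setminus\{b\}$, every nonempty $S\subseteq A\setminus\{a\}$ loses at most one neighbor. It therefore still satisfies $|N_{B\setminus\{b\}}(S)|\ge|S|$. By induction $A\setminus\{a\}$ has a matching, and adding the edge $ab$ gives a matching of $A$.

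\emph{Case 2: a tight set.} Otherwise there is a nonempty proper subset $S_0\subsetneq A$ with $|N_B(S_0)|=|S_0|$. The graph induced on $S_0\cup N_B(S_0)$ satisfies Hall's condition, because neighborhoods of subsets of $S_0$ lie inside $N_B(S_0)$. By induction it has a matching of $S_0$. Next consider the graph induced on $(A\setminus S_0)\cup(B\setminus N_B(S_0))$. For $T\subseteq A\setminus S_0$, applying Hall's condition to $T\cup S_0$ in $G$ gives
\[
|N_B(T)\setminus N_B(S_0)| = |N_B(T\cup S_0)|-|N_B(S_0)|\ge |T|+|S_0|-|S_0|=|T|.
\]
So this graph also satisfies Hall's condition and has a matching of $A\setminus S_0$ by induction. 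The two matchings use disjoint vertex sets, so their union is a matching of $A$.

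The only step with real content is the inequality in Case 2. It works because the union $T\cup S_0$ is disjoint, and because a tight $S_0$ absorbs exactly its own neighborhood, so removing it costs the remaining sets nothing.
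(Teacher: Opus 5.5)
Your proof is correct. The paper gives no proof of this statement and only cites Hall's theorem from Diestel's book; your argument is the standard Halmos--Vaughan induction on $|A|$ (the slack case versus a tight set $S_0$ with $|N_B(S_0)|=|S_0|$), which is one of the proofs in that reference, and you handle the key step $|N_B(T)\setminus N_B(S_0)|\ge |T|$, obtained from Hall's condition on the disjoint union $T\cup S_0$, correctly.
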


\section{Basic results}
In this section, we present some preliminary lemmas.
First, we provide a lemma which will be used repeatedly in our proof of Theorem \ref{main} and Theorem \ref{main2}.
\begin{lemma}\label{adjust}
    Let $G$ be a graph and $xy$ be an edge in $G$. 
    Let $p\in [0,1]$, and let $\pi$ be an optimal transport plan from $\mu_x^p$ to $\mu_y^p$. 
    For any vertices $\{u,v,z,w\}\subseteq V$ with $\pi(u,v)>0$ and $\pi(z,w)>0$, we have 
    \begin{align*}
        \rho(u,v)+\rho (z,w)\le \rho(u,w)+\rho (z,v).
    \end{align*}
\end{lemma}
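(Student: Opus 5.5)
This is the standard cyclic-monotonicity exchange argument. We argue by contradiction: if the inequality failed, swapping the two destinations would give a strictly cheaper transport plan, contradicting optimality of $\pi$.

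Suppose $\pi(u,v)>0$ and $\pi(z,w)>0$ but
\[
\rho(u,v)+\rho(z,w)>\rho(u,w)+\rho(z,v).
\]
Set $\varepsilon:=\min\{\pi(u,v),\pi(z,w)\}>0$ and define a new map $\pi'$ by
\[
\pi'(u,v)=\pi(u,v)-\varepsilon,\qquad \pi'(z,w)=\pi(z,w)-\varepsilon,
\]
\[
\pi'(u,w)=\pi(u,w)+\varepsilon,\qquad \pi'(z,v)=\pi(z,v)+\varepsilon,
\]
with $\pi'=\pi$ at all other pairs. This is meant as an additive update of four entries. If some of the pairs coincide, for instance $u=z$ or $v=w$, the additions and subtractions simply cancel and $\pi'=\pi$. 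In that degenerate case the assumed strict inequality reads $\rho(u,v)+\rho(z,w)>\rho(u,v)+\rho(z,w)$, which is impossible, so we may assume the four updated pairs are genuinely handled additively without conflict.

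Next we check that $\pi'$ is again a transport plan from $\mu_x^p$ to $\mu_y^p$.

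\textbf{Nonnegativity.} The entries at $(u,v)$ and $(z,w)$ remain nonnegative by the choice of $\varepsilon$. The other two entries only increase.

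\textbf{Row sums.} Row $u$ loses $\varepsilon$ at column $v$ and gains $\varepsilon$ at column $w$. Row $z$ loses $\varepsilon$ at $w$ and gains $\varepsilon$ at $v$. So all row sums are unchanged.

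\textbf{Column sums.} The same bookkeeping applies: column $v$ loses $\varepsilon$ from $u$ and gains $\varepsilon$ from $z$, and column $w$ does the reverse. So all column sums are unchanged.

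\textbf{Range.} Every value of $\pi'$ lies in $[0,1]$, because the total mass is $1$ and all entries are nonnegative.

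Finally, the cost of $\pi'$ minus the cost of $\pi$ equals
\[
\varepsilon\bigl(\rho(u,w)+\rho(z,v)-\rho(u,v)-\rho(z,w)\bigr)<0 .
\]
This contradicts the optimality of $\pi$, so the claimed inequality holds.

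The only point needing care is the degenerate coincidence of the four vertices, which is handled above. There is no substantive obstacle.
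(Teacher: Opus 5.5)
Your proposal is correct and follows the paper's proof: the paper uses the same modified plan $\pi_1=\pi-\epsilon(\delta_{zw}+\delta_{uv}-\delta_{zv}-\delta_{uw})$ with $\epsilon=\min\{\pi(u,v),\pi(z,w)\}$ and compares costs via optimality. The only differences are that you phrase it as a contradiction and check explicitly that $\pi'$ is a valid transport plan and that the coincident cases $u=z$ or $v=w$ are harmless; the paper leaves both of these implicit.
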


\begin{proof}
    Let $\epsilon:=\min\{\pi(u,v),\pi(z,w)\}$. 
    Then $\epsilon>0$. 
    Consider the transport plan $\pi_1$ defined as $$\pi_1:=\pi-\epsilon(\delta_{zw}+\delta_{uv}-\delta_{zv}-\delta_{uw}).$$
        It follows that
        \begin{align*}
            \sum_{a,b\in V}\rho(a,b)\pi(a,b)-\sum_{a,b\in V}\rho(a,b)\pi_1(a,b)
            =\epsilon (\rho(u,v)+\rho (z,w)-\rho(u,w)-\rho (z,v)).
        \end{align*}
        By the assumption that $\pi$ is optimal, the value of the above expression is non-positive. 
        As $\epsilon>0$, we obtain $\rho(u,v)+\rho (z,w)\le \rho(u,w)+\rho (z,v)$.
\end{proof}

We remark that the vertices $u,v,z,w$ in Lemma \ref{adjust} need not be distinct. Based on Lemma~\ref{simple}, we obtain the following lemmas on Lin--Lu--Yau curvature of an edge. 

\begin{lemma}\label{lowerbound}
    Let $G$ be a graph and $xy$ be an edge in $G$ with $d_x\ge d_y$.
    Then,
    $$\kappa_{LLY}(x,y)\ge \frac{2|N_{xy}|+2}{|N_{xy}|+|A_x|+1} -\frac{|N_{xy}|+2|A_y|}{|N_{xy}|+|A_y|+1}.$$
\end{lemma}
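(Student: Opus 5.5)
The plan is to exhibit one explicit (not necessarily optimal) transport plan from $\mu_x^p$ to $\mu_y^p$ at a single convenient idleness parameter. Its cost bounds $W$ from above, and \eqref{eq:bourne} then turns that into a lower bound on $\kappa_{LLY}(x,y)$. The natural choice is $p=\frac{1}{d_y+1}$. Since $d_x\ge d_y$, this lies in $\left[\frac{1}{\max\{d_x,d_y\}+1},1\right]$, so $\kappa_{LLY}(x,y)=\kappa_p(x,y)/(1-p)$. With this $p$ the measure $\mu_y^p$ is uniform, with mass $q:=\frac{1}{d_y+1}$ on each vertex of $\{x,y\}\cup N_{xy}\cup A_y$. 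The measure $\mu_x^p$ has mass $q$ at $x$ and mass $m:=\frac{1-q}{d_x}$ on each vertex of $\{y\}\cup N_{xy}\cup A_x$. Write $N=|N_{xy}|$, $a=|A_x|$, $b=|A_y|$, so that $d_x=N+a+1$ and $d_y=N+b+1$; in particular $m\le q$.

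The plan $\pi$ is as follows. Keep mass $q$ fixed at $x$, and keep mass $m$ fixed at $y$ and at every vertex of $N_{xy}$. This leaves each vertex of $\{y\}\cup N_{xy}$ with a deficit $q-m\ge 0$ and each vertex of $A_y$ with a deficit $q$. The only remaining supply is the mass $m$ at each vertex of $A_x$, of total $am$. Comparing total masses of the two probability measures gives $am=(N+1)(q-m)+bq$, so this supply exactly fills the deficits, and we may route it arbitrarily, e.g.\ proportionally.

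For the cost we only need distance bounds. A vertex $u\in A_x$ is adjacent to $x$, and $x$ is adjacent to $y$ and to every vertex of $N_{xy}$. Hence $\rho(u,v)\le 2$ for $v\in\{y\}\cup N_{xy}$, and $\rho(u,w)\le 3$ for $w\in A_y$ (via $x,y$). Therefore
\[
W(\mu_x^p,\mu_y^p)\le 2(N+1)(q-m)+3bq = 2am+bq ,
\]
where the equality uses the mass balance above.

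It remains to compute
\[
\kappa_{LLY}(x,y)\ge \frac{1-2am-bq}{1-q}.
\]
Using $1-q=\frac{N+b+1}{N+b+2}$, we get $\frac{am}{1-q}=\frac{a}{N+a+1}$, $\frac{bq}{1-q}=\frac{b}{N+b+1}$ and $\frac{1}{1-q}=1+\frac{1}{N+b+1}$. The bound becomes
\[
1-\frac{2a}{N+a+1}-\frac{b-1}{N+b+1}.
\]
A short rewriting shows this equals
\[
\frac{2N+2}{N+a+1}-\frac{N+2b}{N+b+1},
\]
since both equal $2-\frac{2a}{N+a+1}-1-\frac{b-1}{N+b+1}$.

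I expect no real obstacle. The only points needing care are checking that $p=1/(d_y+1)$ is in the range where \eqref{eq:bourne} applies, and the bookkeeping identity $am=(N+1)(q-m)+bq$. Lemma \ref{simple} is consistent with this plan, since it predicts $\pi(x,y)=0$ at this $p$, but it is not needed for an upper bound on $W$.
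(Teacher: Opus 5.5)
Your proof is correct and is essentially the paper's argument. The paper takes the simple plan from Lemma \ref{simple} with $\pi(x,y)=p-\frac{1-p}{d_y}$ for an arbitrary $p\in[\frac{1}{1+d_y},1)$; at your choice $p=\frac{1}{d_y+1}$ this is exactly your plan with $\pi(x,y)=0$, and the paper bounds its cost with the same distance estimates ($2$ into $\{y\}\cup N_{xy}$, $3$ into $A_y$). The only difference is that you write the feasible plan down explicitly instead of citing Lemma \ref{simple}, which is legitimate because only an upper bound on $W$ is needed.
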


\begin{proof}
    For any fixed $p\in \left[ \frac{1}{1+d_y},1 \right)$, by Lemma \ref{simple}, there is a simple optimal transport plan $\pi$ from $\mu_x^p$ to $\mu_y^p$ such that 
    $$\pi(x,y)=p-\frac{1-p}{d_y}.$$ 
    For any two vertices $u\in A_x$ and $v\in A_y\cup N_{xy}$, we have $\rho(u,v)\le 3$ if $v\in A_y$, and $\rho(u,v)\le 2$ if $v\in N_{xy}$. 
    Thus, we derive
    \begin{align*}
        W(\mu_x^p,\mu_y^p)&= \sum_{u\in V}\sum_{v\in V}\rho(u,v)\pi(u,v)\\
        &= \sum_{u\in A_x}\sum_{v\in A_y\cup N_{xy}}\rho(u,v)\pi(u,v)+ \pi(x,y)+ \sum_{u\in A_x}\rho(u,y)\pi(u,y)\\
        &\le 3\sum_{u\in A_x}\sum_{v\in A_y}\pi(u,v)+ 2\sum_{u\in A_x}\sum_{v\in N_{xy}}\pi(u,v) + \pi(x,y)+ 2\sum_{u\in A_x}\pi(u,y)\\
        &=3|A_y|\frac{1-p}{d_y}+2|N_{xy}|\left(\frac{1-p}{d_y}-\frac{1-p}{d_x}\right)+ p-\frac{1-p}{d_y}+2\left(\frac{1-p}{d_y}-\frac{1-p}{d_x}\right).
    \end{align*}
    It follows by \eqref{eq:bourne} that
    \begin{align*}
        \kappa_{LLY}(x,y)=\frac{1-W(\mu_x^p,\mu_y^p)}{1-p}&\ge 1-\frac{2|N_{xy}|+3|A_y|+1}{d_y}+\frac{2|N_{xy}|+2}{d_x}\\
        &= \frac{2|N_{xy}|+2}{d_x}-\frac{|N_{xy}|+2|A_y|}{d_y}\\
        &= \frac{2|N_{xy}|+2}{|N_{xy}|+|A_x|+1}- \frac{|N_{xy}|+2|A_y|}{|N_{xy}|+|A_y|+1},
    \end{align*}
    completing the proof.
\end{proof}

\begin{lemma}\label{m2}
    Let $G=(V,E)$ be a graph. 
    Let $x$ and $y$ be two adjacent vertices in $G$ with $d_x\ge d_y$. 
    Let $p\in \left[ \frac{1}{1+d_y},1 \right)$, and let $\pi$ be a simple transport plan from $\mu_x^p$ to $\mu_y^p$ such that $\pi(x,y)=p-\frac{1-p}{d_y}.$ 
    Suppose that for any two vertices $u\in A_x$ and $v\in A_y$ with $\pi(u,v)>0$, we have $\rho(u,v)\le 2$. 
    Then,
    \begin{align*}
        \kappa_{LLY}(x,y)\ge \frac{1}{d_y}+\frac{|N_{xy}|+1}{d_x}-\frac{m_2}{1-p},
    \end{align*}
    where $$m_2:=\sum_{u\in V}\sum_{\rho(u,v)=2} \pi(u,v).$$
\end{lemma}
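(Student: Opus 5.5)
The plan is to bound $W(\mu_x^p,\mu_y^p)$ from above by the cost of the given simple plan $\pi$. This cost splits into the mass moved at distance $1$ and the mass moved at distance $2$. The key point is that no positive mass of $\pi$ travels distance $3$ or more, so the cost equals $m_1+2m_2$, where $m_1$ is the mass moved at distance exactly $1$. Then $m_1+m_2$ is simply the total off-diagonal mass, and this is computable because $\pi$ is simple.

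\textbf{Step 1: identify where the mass of $\pi$ goes.} I will use $d_x\ge d_y$ and $p\ge \frac{1}{1+d_y}$.
\begin{itemize}
\item At a vertex $v\in N_{xy}$ we have $\mu_x^p(v)=\frac{1-p}{d_x}\le\frac{1-p}{d_y}=\mu_y^p(v)$. By simplicity all of the $\mu_x^p$-mass at $v$ stays put.
\item At $y$ we have $\mu_x^p(y)=\frac{1-p}{d_x}\le p$, so that mass also stays put.
\item At $x$, the amount $\frac{1-p}{d_y}$ stays. The remaining $p-\frac{1-p}{d_y}$ is exactly $\pi(x,y)$, so it goes to $y$ at distance $1$.
\item The only other sources are vertices $u\in A_x$. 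Their targets lie in $N_{xy}\cup\{y\}$, at distance at most $2$, or in $A_y$, at distance at most $2$ by the hypothesis on $\pi$.
\end{itemize}
Hence every pair $(u,v)$ with $u\ne v$ and $\pi(u,v)>0$ has $\rho(u,v)\in\{1,2\}$.

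\textbf{Step 2: compute the diagonal mass.} By simplicity, the diagonal mass is
$$\sum_u\pi(u,u)=\frac{1-p}{d_y}+\frac{1-p}{d_x}+|N_{xy}|\frac{1-p}{d_x},$$
coming from $x$, $y$ and $N_{xy}$ respectively; all other vertices have $\min\{\mu_x^p,\mu_y^p\}=0$. Therefore
$$m_1+m_2=1-(1-p)\Big(\frac{1}{d_y}+\frac{|N_{xy}|+1}{d_x}\Big),$$
and so
$$W(\mu_x^p,\mu_y^p)\le m_1+2m_2=1-(1-p)\Big(\frac{1}{d_y}+\frac{|N_{xy}|+1}{d_x}\Big)+m_2 .$$

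\textbf{Step 3: conclude.} We have $p\ge\frac{1}{1+d_y}=\frac{1}{\max\{d_x,d_y\}+1}\cdot\frac{1+d_x}{1+d_y}\ge\frac{1}{\max\{d_x,d_y\}+1}$, so \eqref{eq:bourne} gives $\kappa_{LLY}(x,y)=\frac{1-W}{1-p}$. Substituting the bound from Step 2 yields the claimed inequality.

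The only step needing care is the bookkeeping in Step 1. One must verify that the conditions $d_x\ge d_y$ and $p\ge\frac1{1+d_y}$ force the diagonal minima to be exactly as stated, and that no mass is sent from $N_{xy}$ or $y$. Once this is checked, the rest is arithmetic.
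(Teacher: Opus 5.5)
Your proposal is correct and follows essentially the same route as the paper: bound $W(\mu_x^p,\mu_y^p)$ by the cost of $\pi$, write this cost as the total off-diagonal mass plus $m_2$ (since all moved mass travels distance $1$ or $2$), evaluate the off-diagonal mass using simplicity, and then apply \eqref{eq:bourne}. The differences are cosmetic. You obtain the off-diagonal mass as $1$ minus the diagonal mass, whereas the paper writes it directly as $\pi(x,y)+|A_x|\frac{1-p}{d_x}$; the two agree because $d_x=|N_{xy}|+|A_x|+1$. You also spell out the check that no mass travels distance $3$ or more, which the paper leaves implicit.
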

\begin{proof}
    Since $\pi$ is a simple transport plan, we have
\begin{align*}
    W(\mu_x^p,\mu_y^p)\le \sum\limits_{u\in V}\sum\limits_{v\in V}\pi(u,v)\rho(u,v)
    &=\sum\limits_{u\in V}\sum\limits_{\rho(u,v)=1}\pi(u,v)+2\sum\limits_{u\in V}\sum\limits_{\rho(u,v)=2}\pi(u,v)\\
    &=\sum\limits_{u\in V}\sum\limits_{v\ne u}\pi(u,v)+m_2.
\end{align*}
Note that
$$\sum\limits_{u\in V}\sum\limits_{v\ne u}\pi(u,v)=\pi(x,y)+|A_x|\frac{1-p}{d_x}=\left(p-\frac{1-p}{d_y} \right)+|A_x|\frac{1-p}{d_x}.$$
We obtain
    $$W(\mu_x^p,\mu_y^p)
    \le \left(p-\frac{1-p}{d_y} \right)+|A_x|\frac{1-p}{d_x}+m_2.$$
It follows by~\eqref{eq:bourne} that
$$\kappa_{LLY}(x,y)=\frac{1-W(\mu_x^p,\mu_y^p)}{1-p}\ge 1+\frac{1}{d_y}-\frac{|A_x|}{d_x}-\frac{m_2}{1-p}=\frac{1}{d_y}+\frac{|N_{xy}|+1}{d_x}-\frac{m_2}{1-p}.  $$
This completes the proof.
\end{proof}

\begin{corollary}\label{basic}
    Let $G$ be a graph. 
    Let $x$ and $y$ be two adjacent vertices in $G$ with $d_x\ge d_y$. 
    Let $p\in \left[ \frac{1}{1+d_y},1 \right)$, and let $\pi$ be a simple transport plan from $\mu_x^p$ to $\mu_y^p$ such that $\pi(x,y)=p-\frac{1-p}{d_y}.$ 
    Suppose that for any two vertices $u\in A_x$ and $v\in A_y\cup N_{xy}$ with $\pi(u,v)>0$, we have $\rho(u,v)=1$. Then, $\kappa_{LLY}(x,y)>0$.
\end{corollary}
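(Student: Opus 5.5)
The plan is to compute the cost of $\pi$ directly and then apply \eqref{eq:bourne}, essentially as in the proof of Lemma~\ref{m2}. Since $\pi$ is simple, it fixes the mass $\min\{\mu_x^p(u),\mu_y^p(u)\}$ at every vertex $u$. So all mass that actually moves consists of:
\begin{itemize}
\item the mass $\pi(x,y)=p-\frac{1-p}{d_y}$ sent from $x$ to $y$;
\item the mass leaving $A_x$, of total $|A_x|\frac{1-p}{d_x}$;
\item possibly mass leaving $N_{xy}$, which happens only when $d_x<d_y$. This cannot occur here, because $d_x\ge d_y$ gives $\mu_x^p(u)\le\mu_y^p(u)$ on $N_{xy}$.
\end{itemize}
Mass from $A_x$ may go to $A_y\cup N_{xy}$, and possibly to $y$ if $y$ still has room. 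By hypothesis, every pair $(u,v)$ with $u\in A_x$, $v\in A_y\cup N_{xy}$ and $\pi(u,v)>0$ has $\rho(u,v)=1$. A pair $(u,y)$ with $u\in A_x$ has $\rho(u,y)=2$, since $u$ is not adjacent to $y$. This is the only place distance $2$ can appear, and I must control it.

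The key bookkeeping step is to show that no mass goes from $A_x$ to $y$. The capacity of $y$ is $\mu_y^p(y)=p$. The part of it not supplied by $y$ itself is $p-\min\{p,\frac{1-p}{d_x}\}$. Since $p\ge\frac1{1+d_y}\ge\frac1{1+d_x}$, we have $p\ge\frac{1-p}{d_x}$, so $\pi(y,y)=\frac{1-p}{d_x}$. (Here $\mu_x^p(y)=\frac{1-p}{d_x}$ because $y$ is a neighbor of $x$.) The total mass arriving at $y$ is therefore
\[
\frac{1-p}{d_x}+\pi(x,y)+\sum_{u\in A_x}\pi(u,y)+\sum_{u\in N_{xy}}\pi(u,y)=p.
\]
Mass at $u\in N_{xy}$ stays put because $\pi$ is simple. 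Hence $\sum_{u\in A_x}\pi(u,y)=\frac{1-p}{d_y}-\frac{1-p}{d_x}$, which need not be zero. So I will not try to prove that the transport $A_x\to y$ vanishes. Instead I will bound the cost $W\le \pi(x,y)+\sum_{u\in A_x}\sum_{v\neq u}\rho(u,v)\pi(u,v)$, with $\rho=1$ toward $A_y\cup N_{xy}$ and $\rho=2$ toward $y$.

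Plugging in gives
\[
W\le p-\frac{1-p}{d_y}+|A_x|\frac{1-p}{d_x}+\left(\frac{1-p}{d_y}-\frac{1-p}{d_x}\right)=p+(|A_x|-1)\frac{1-p}{d_x}.
\]
By \eqref{eq:bourne},
\[
\kappa_{LLY}(x,y)\ge \frac{1-W}{1-p}\ge 1-\frac{|A_x|-1}{d_x}=\frac{|N_{xy}|+2}{d_x}>0,
\]
using $d_x=|A_x|+|N_{xy}|+1$. Alternatively one could invoke Lemma~\ref{m2} with $m_2=\frac{1-p}{d_y}-\frac{1-p}{d_x}$, which yields the same bound $\frac{|N_{xy}|+2}{d_x}$.

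The main obstacle is the mass accounting at $y$ and on $N_{xy}$: I must check carefully that simplicity pins the mass on $N_{xy}$ and fixes $\pi(y,y)$, so that the only distance-$2$ transport is $A_x\to y$, of exactly the amount computed. The remaining steps are routine arithmetic.
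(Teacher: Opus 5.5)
Your proof is correct and is essentially the paper's: the paper notes that the only distance-$2$ transport is $\sum_{u\in A_x}\pi(u,y)=\frac{1-p}{d_y}-\frac{1-p}{d_x}$ and plugs this value of $m_2$ into Lemma~\ref{m2}, which is the same cost computation you carry out inline, giving $\kappa_{LLY}(x,y)\ge (|N_{xy}|+2)/d_x>0$. You also use one fact without stating it, namely that $x$ sends nothing besides $\pi(x,y)$; this follows from simplicity, since $\pi(x,x)=\min\{p,\frac{1-p}{d_y}\}=\frac{1-p}{d_y}$ for $p\ge\frac{1}{1+d_y}$.
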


\begin{proof}
    Note that
    $$\sum_{u\in V}\sum_{\rho(u,v)=2} \pi(u,v)=\sum_{u\in A_x}\pi(u,y)=\frac{1-p}{d_y}-\frac{1-p}{d_x}.$$
    Then, Lemma \ref{m2} gives $\kappa_{LLY}(x,y)\ge (|N_{xy}|+2)/d_x >0$.
\end{proof}

\section{\texorpdfstring{Complements of $C_4$-Free Graphs}{Complements of C4-Free Graphs}}

In this section, we prove Theorem \ref{main}.
\begin{proof}[Proof of Theorem \ref{main}]
    Let $G=(V,E)$ be a graph such that the complement graph $\overline{G}$ of $G$ contains no cycles of length 4.
    Assume that there is an edge $xy$ in $G$ such that the Lin--Lu--Yau curvature $\kappa_{LLY}(x,y)$ of $xy$ is non-positive. 
    Our aim is to show that $G$ is a path on 4 vertices.
    Without loss of generality, assume that $d_x\ge d_y$.
    Then, we have $|A_x|=d_x-|N_{xy}|-1\ge d_y-|N_{xy}|-1=|A_y|$.
    For any fixed $p\in \left[ \frac{1}{1+d_y},1 \right)$,  by Lemma \ref{simple}, there is a simple optimal transport plan from $\mu_x^p$ to $\mu_y^p$ such that $$\pi(x,y)=p-\frac{1-p}{d_y}.$$
    According to $\kappa_{LLY}(x,y)\le 0$ and Corollary \ref{basic}, there exist $z\in A_x$ and $w\in A_y\cup N_{xy}$ such that $\pi(z,w)>0$ and $\rho(z,w)\ge 2$.

    The following claim states that the vertex $z$ is unique.

    \begin{claim}\label{claim1}
        For any $u\in A_x\backslash\{z\}$ and $v\in A_y\cup N_{xy}$ such that $\pi(u,v)>0$, we have $\rho(u,v)=1$.
    \end{claim}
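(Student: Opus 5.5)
Suppose, for contradiction, that there are $u\in A_x\setminus\{z\}$ and $v\in A_y\cup N_{xy}$ with $\pi(u,v)>0$ and $\rho(u,v)\ge 2$. The aim is to produce a $4$-cycle in $\overline{G}$. Since $\rho(z,w)\ge 2$ and $\rho(u,v)\ge 2$, both $zw$ and $uv$ are edges of $\overline{G}$.

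First I apply Lemma~\ref{adjust} to the pairs $(u,v)$ and $(z,w)$, which gives
\[
\rho(u,v)+\rho(z,w)\le \rho(u,w)+\rho(z,v).
\]
The vertices $u,z\in A_x$ are distinct, but $v=w$ is allowed.

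\textbf{Case $v\neq w$.} The four vertices $u,z,v,w$ are then distinct, since $A_x$ is disjoint from $A_y\cup N_{xy}$. Every vertex of $A_x$ is adjacent to $x$, and every vertex of $A_y\cup N_{xy}$ is adjacent to $y$. Hence every pair $(a,b)$ with $a\in A_x$ and $b\in A_y\cup N_{xy}$ satisfies $\rho(a,b)\le 3$, and $\rho(a,b)\le 2$ whenever $b\in N_{xy}$. I split according to the distances.
\begin{enumerate}[(1)]
\item If $\rho(u,w)\ge 2$ and $\rho(z,v)\ge 2$, then $u,z,v,w$ are pairwise non-adjacent across the two sides. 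So $u\!-\!v\!-\!z\!-\!w\!-\!u$ is a $4$-cycle in $\overline{G}$, which is a contradiction.
\item Otherwise one of $\rho(u,w)$, $\rho(z,v)$ equals $1$. Then the inequality forces the other to be at least $3$, with $\rho(u,v)=\rho(z,w)=2$. Say $\rho(u,w)=1$ and $\rho(z,v)=3$. Then $z$ and $v$ have no common neighbour, $v\in A_y$, and $z$ is not adjacent to $y$. I then look for a $4$-cycle in $\overline{G}$ through $z$, $v$, $x$, $y$. Since $v\in A_y$, we have $xv\in E(\overline{G})$. Since $z\in A_x$, we have $yz\in E(\overline{G})$. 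Since $\rho(z,v)=3$, we have $zv\in E(\overline{G})$. The cycle is closed by the second non-edge $zw$: $w$ is not adjacent to $z$, and $w$ is adjacent to $y$, so $w\ne x$. This gives the $4$-cycle $z\!-\!w\!-\!x\!-\!v\!-\!z$ in $\overline{G}$ provided $xw\notin E$, i.e.\ provided $w\in A_y$. If instead $w\in N_{xy}$, I use $u$ in place of $x$, or the cycle $z\!-\!v\!-\!x\!-\!?\,$. Concretely, I use that $z$, $v$ have no common neighbour: $x$ is a neighbour of $z$, so $xv\notin E$, and $y$ is a neighbour of $v$, so $yz\notin E$. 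Then $z\!-\!y\!-\!u\!-\!v\!-\!z$ is a candidate, provided $uy\notin E$, which holds since $u\in A_x$, and $uv\notin E$, which holds. So $z y,\ y u,\ u v,\ v z$ are all non-edges of $G$, giving a $4$-cycle in $\overline{G}$. This argument works regardless of $w$, so it settles this subcase directly. The symmetric subcase $\rho(z,v)=1$, $\rho(u,w)=3$ uses the cycle $u\!-\!y\!-\!z\!-\!w\!-\!u$ in the same way.
\end{enumerate}

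\textbf{Case $v=w$.} Then $u$ and $z$ are both non-adjacent to $v$ and both non-adjacent to $y$, because they lie in $A_x$. Since $v\in A_y\cup N_{xy}$ means $v\ne y$, the cycle $u\!-\!v\!-\!z\!-\!y\!-\!u$ is a $4$-cycle in $\overline{G}$, which is a contradiction.

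Looking back, case (2) already used only the fact that $u,z\in A_x$ are non-adjacent to $y$. So the simplest uniform argument is this: whenever $u,z\in A_x$ are distinct and some $b\ne y$ is non-adjacent to both of them, we get the $4$-cycle $u\!-\!b\!-\!z\!-\!y\!-\!u$ in $\overline{G}$. The main obstacle is therefore case (1) versus (2). I would present it as follows. If $\rho(z,v)\ge2$, take $b=v$. If $\rho(u,w)\ge 2$, take $b=w$. Otherwise both distances are $1$, and then Lemma~\ref{adjust} gives $4\le\rho(u,v)+\rho(z,w)\le 2$, a contradiction.
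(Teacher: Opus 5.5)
Your argument is correct and, in its final uniform form, is exactly the paper's proof: a common non-neighbour $b\in\{v,w\}$ of $u$ and $z$ would give the $4$-cycle $u\,b\,z\,y$ in $\overline{G}$ (the case $v=w$ is subsumed by $b=v$), so $zv,uw\in E$ and Lemma~\ref{adjust} yields $4\le 2$. The preliminary case analysis with distance $3$ and the tentative cycle through $x$ is unnecessary and can be dropped.
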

    \begin{proof}
        For a contradiction, assume that there exist $u_0\in A_x\backslash\{z\}$ and $v_0\in A_y\cup N_{xy}$ such that $\pi(u_0,v_0)>0$ and $\rho(u_0,v_0)\ge 2$. 
        If $w=v_0$, then $wzyu_0$ is a cycle of length $4$ in $\overline{G}$, which is a contradiction. 
        Thus, we have $w\ne v_0$. 
        If $z$ is not adjacent to $v_0$, then $v_0zyu_0$ is a cycle of length $4$ in $\overline{G}$. 
        Thus, $z$ is adjacent to $v_0$. 
        Similarly, considering $zwu_0y$ yields $wu_0\in E$. 
        Since $\pi(u_0,v_0)>0$, $\pi(z,w)>0$, and $\pi$ is an optimal transport plan, by Lemma~\ref{adjust}, we have 
        \begin{align*}
        4\leq \rho(u_0,v_0)+\rho (z,w)\le \rho(u_0,w)+\rho (z,v_0)=2, 
    \end{align*}
    which leads to a contradiction.
    \end{proof}

Note that the existence of $z$ implies $A_x\ne \emptyset$. 
We now divide the proof into two cases according to the size of $A_x$.

\noindent\textbf{Case 1:} We have $|A_x|= 1$.

If $|A_y|=0$, then Lemma \ref{lowerbound} gives $\kappa_{LLY}(x,y)\ge \frac{2|N_{xy}|+2}{|N_{xy}|+2}- \frac{|N_{xy}|}{|N_{xy}|+1}>0$. 
Thus, we have $|A_y|=1$, since $|A_x|\ge |A_y|$.  
Using Lemma \ref{lowerbound} again yields $\kappa_{LLY}(x,y)\ge \frac{|N_{xy}|}{|N_{xy}|+2}\ge 0$. 
It follows that $\kappa_{LLY}(x,y)=0$ and $|N_{xy}|=0$. 
That is, $A_x=\{z\}$ and $A_y=\{w\}$.
If $V= \{x,y,z,w\}$, then $G$ is a path on 4 vertices, and the proof is done.
Assume that $V\ne \{x,y,z,w\}$. 
Let $u_0$ be a vertex in $V\backslash \{x,y,z,w\}$. 
We next show that $u_0$ is adjacent to both $z$ and $w$. If $u_0$ is not adjacent to $z$, then $u_0zwx$ is a cycle of length 4 in $\overline{G}$, which is a contradiction. Hence, $u_0$ is adjacent to $z$. 
Similarly, $u_0$ is adjacent to $w$. So, $\rho(z,w)=2$.
Let $\pi_1$ be the transport plan defined as
        \[\pi_1(u,v)=\left\{
            \begin{array}{ll}
                \min\{\mu_x^p(u),\mu_y^p(u)\}, & \hbox{if $u=v$;}\\
                p-\frac{1-p}{2}, & \hbox{if $u=x$, $v=y$;} \\
                \frac{1-p}{2}, & \hbox{if $u=z$, $v=w$;} \\
                0, & \hbox{otherwise.}
            \end{array}
            \right.
        \]
By the definition of the Wasserstein distance, we derive
    \begin{align*}
        W(\mu_x^p,\mu_y^p)&\le \sum_{u\in V}\sum_{v\in V}\rho(u,v)\pi_1(u,v)
        = \left(p-\frac{1-p}{2}\right)+2\times\frac{1-p}{2}.
    \end{align*}
    It follows by~\eqref{eq:bourne} that
    \begin{align*}
        \kappa_{LLY}(x,y)=\frac{1-W(\mu_x^p,\mu_y^p)}{1-p}\ge \frac{1}{2},
    \end{align*}
    which contradicts the assumption that $\kappa_{LLY}(x,y)\le 0$.

\noindent\textbf{Case 2:} We have $|A_x|\ge 2$.

Recall that $z\in A_x$ and $w\in A_y\cup N_{xy}$ satisfy $\pi(z,w)>0$ and $\rho(z,w)\ge 2$. 

\begin{claim}\label{claim9}
    For any vertex $v\in A_x\backslash\{z\}$, we have $\pi(v,y)=0$. 
\end{claim}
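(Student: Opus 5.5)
We argue by contradiction. Suppose some $v_0\in A_x\setminus\{z\}$ has $\pi(v_0,y)>0$. The idea is to feed this pair, together with the pair $(z,w)$, into the exchange inequality of Lemma~\ref{adjust}, and then use $C_4$-freeness of $\overline{G}$ to show that swapping the targets would strictly lower the cost.

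First we record the distances involved.
\begin{itemize}
\item Since $v_0\in A_x$, we have $v_0y\notin E$. Also $x$ is adjacent to both $v_0$ and $y$, so $\rho(v_0,y)=2$.
\item We know $\rho(z,w)\ge 2$, so $\rho(v_0,y)+\rho(z,w)\ge 4$.
\item Since $w\in A_y\cup N_{xy}$, $w$ is a neighbor of $y$, so $zy\notin E$ gives $\rho(z,y)=2$.
\end{itemize}
Applying Lemma~\ref{adjust} to $\pi(v_0,y)>0$ and $\pi(z,w)>0$ yields
\[
\rho(v_0,y)+\rho(z,w)\le \rho(v_0,w)+\rho(z,y)=\rho(v_0,w)+2 .
\]
Hence we get a contradiction as soon as $\rho(v_0,w)=1$, i.e.\ $v_0w\in E$.

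It remains to show $v_0w\in E$ using the absence of $4$-cycles in $\overline{G}$. Note $v_0\neq w$, since $v_0\in A_x$ while $w\in A_y\cup N_{xy}$. Suppose instead $v_0w\notin E$. We look for a $4$-cycle among $\{z,w,v_0,y\}$ in $\overline{G}$, using these non-edges of $G$:
\begin{itemize}
\item $zw\notin E$, because $\rho(z,w)\ge 2$;
\item $zy\notin E$, because $z\in A_x$;
\item $v_0y\notin E$, because $v_0\in A_x$;
\item $v_0w\notin E$, by assumption.
\end{itemize}
Then $z\,w\,v_0\,y\,z$ is a closed walk in $\overline{G}$ with edges $zw$, $wv_0$, $v_0y$ and $yz$. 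All four vertices are distinct: $z\neq v_0$ by choice, $w\neq y$, and $z,v_0\notin\{w,y\}$ since $z,v_0\in A_x$. So this is a $4$-cycle in $\overline{G}$, a contradiction. Therefore $v_0w\in E$, and the displayed inequality reads $4\le 3$, which is impossible. This proves the claim.

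The only delicate step is checking that the four vertices are distinct and that each required non-adjacency holds. In particular, the case $w\in N_{xy}$ is fine, because the argument only uses $wy\in E$ (through $\rho(z,y)=2$) and never uses $wx\notin E$. Claim~\ref{claim1} is not needed here.
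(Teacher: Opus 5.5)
Your proof is correct and follows the paper's argument. $C_4$-freeness of $\overline{G}$ applied to $z\,w\,v_0\,y$ forces $v_0w\in E$, and then Lemma~\ref{adjust} gives $4\le \rho(v_0,w)+\rho(z,y)=3$. One small slip: $\rho(z,y)=2$ holds because $x$ is a common neighbor of $z$ and $y$, not because of $w$, which is not adjacent to $z$. So the fact $wy\in E$ is used only to get $w\neq y$ in your distinctness check.
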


\begin{proof}
    Assume that $v_0$ is a vertex in $A_x\backslash\{z\}$ such that $\pi(v_0,y)>0$. 
    If $w$ is not adjacent to $v_0$, then $wv_0yz$ forms a cycle of length 4 in $\overline{G}$, which is a contradiction.
    Hence, $w$ is adjacent to $v_0$.
    Since $\pi(v_0,y)>0$, $\pi(z,w)>0$, and $\pi$ is an optimal transport plan, by Lemma~\ref{adjust}, we have 
        \begin{align*}
        4\leq \rho(v_0,y)+\rho (z,w)\le \rho(v_0,w)+\rho (z,y)=3, 
    \end{align*}
    which is a contradiction.
\end{proof}

Now, combining Claim \ref{claim1} and Claim \ref{claim9}, we derive
\begin{align*}
        W(\mu_x^p,\mu_y^p)&= \sum_{u\in V}\sum_{v\in V}\rho(u,v)\pi(u,v)\\
        &= \sum_{u\in A_x\backslash\{z\}}\sum_{v\in A_y\cup N_{xy}}\rho(u,v)\pi(u,v)+ \sum_{v\in A_y\cup N_{xy}\cup \{y\}}\rho(z,v)\pi(z,v)+ \pi(x,y)\\
        &\le \sum_{u\in A_x\backslash\{z\}}\sum_{v\in A_y\cup N_{xy}}\pi(u,v)+ 3\sum_{v\in A_y\cup N_{xy}\cup \{y\}}\pi(z,v)+ \pi(x,y)\\
        &=(|A_x|-1)\frac{1-p}{d_x}+3\times\frac{1-p}{d_x}+ \left(p-\frac{1-p}{d_y}\right)
\end{align*}
It follows by~\eqref{eq:bourne} that
    \begin{align*}
        \kappa_{LLY}(x,y)=\frac{1-W(\mu_x^p,\mu_y^p)}{1-p}&\ge 1-\frac{|A_x|+2}{d_x}+\frac{1}{d_y}= \frac{|N_{xy}|-1}{d_x}+\frac{1}{d_y}\ge \frac{1}{d_y}-\frac{1}{d_x}\ge 0.
    \end{align*}
Since $\kappa_{LLY}(x,y)\le 0$, we have $\kappa_{LLY}(x,y)=0$, and the equality implies $d_x=d_y$.
So, $|A_y|=|A_x|\ge 2$.
Take a vertex $v_0\in A_y\backslash \{w\}$. 
Since $\pi(z,w)>0$, we find $\pi(z,v_0)<\mu^p_x(z)\le \mu^p_y(v_0)$. Thus, there exists $u_0\in A_x$ such that $\pi(u_0,v_0)>0$.
If $u_0w\notin E$, then $u_0wzy$ forms a cycle of length 4 in $\overline{G}$, which is a contradiction. So, $u_0w\in E$. Similarly, considering $zv_0xw$ results in $zv_0\in E$.
Applying Lemma \ref{adjust} with vertices $z,w,u_0,v_0$ yields
$$3\leq \rho(z,w)+\rho (u_0,v_0)\le \rho(z,v_0)+\rho (u_0,w)=2,$$
which is impossible. This completes the proof.
\end{proof}

\section{\texorpdfstring{Complements of $K_{2,t}$-Free Graphs}{Complements of K2t-Free Graphs}}
In this section, we prove Theorem~\ref{main2}.

\begin{proof}[Proof of Theorem \ref{main2}]
    Let $G=(V,E)$ be a graph on $n$ vertices such that $n\ge \max\{t^2-2t +2,8t\}$ and the complement graph $\overline{G}$ of $G$ contains no $K_{2,t}$.
    For a contradiction, assume that there is an edge $xy$ in $G$ such that the Lin--Lu--Yau curvature $\kappa_{LLY}(x,y)$ of $xy$ is non-positive. 
    Without loss of generality, assume that $d_x\ge d_y$. 
    Then, $|A_x|\ge |A_y|$. 

    \begin{claim}\label{sum}
        We have $n\le d_x+d_y-|N_{xy}|+t-1$.
    \end{claim}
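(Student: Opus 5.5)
The claim says $|R_{xy}|\le t-1$, because $n = 2 + |N_{xy}| + |A_x| + |A_y| + |R_{xy}|$ and $d_x+d_y-|N_{xy}| = 2+|N_{xy}|+|A_x|+|A_y|$. So the inequality $n\le d_x+d_y-|N_{xy}|+t-1$ is equivalent to $|R_{xy}|\le t-1$. I will prove it in that form, by contradiction.

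Suppose $|R_{xy}|\ge t$. Every vertex of $R_{xy}$ is, by definition, adjacent in $G$ to neither $x$ nor $y$. So in $\overline{G}$ both $x$ and $y$ are adjacent to every vertex of $R_{xy}$. Take any $t$ vertices $r_1,\dots,r_t\in R_{xy}$. The vertices $x,y$ together with $r_1,\dots,r_t$ then span a copy of $K_{2,t}$ in $\overline{G}$, with parts $\{x,y\}$ and $\{r_1,\dots,r_t\}$. The parts are disjoint since $R_{xy}\cap\{x,y\}=\emptyset$, and $x\ne y$. The edge $xy\in E$ is irrelevant here, because $K_{2,t}$ requires no edge inside a part. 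This contradicts the hypothesis that $\overline{G}$ is $K_{2,t}$-free, so $|R_{xy}|\le t-1$, which gives the claim.

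Neither the curvature assumption nor the bound on $n$ is needed. The only point needing care is the vertex count: $x\in N_V(y)$ and $y\in N_V(x)$, so $|N_V(x)\cup N_V(y)| = d_x+d_y-|N_{xy}|$ already includes $x$ and $y$, and therefore $n = d_x+d_y-|N_{xy}|+|R_{xy}|$ exactly.
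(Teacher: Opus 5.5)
Your proof is correct and follows the paper's argument exactly: the paper also notes that $x$, $y$ and any $t$ vertices of $R_{xy}$ would form a $K_{2,t}$ in $\overline{G}$, so $|R_{xy}|\le t-1$, and then uses $n=d_x+d_y-|N_{xy}|+|R_{xy}|$. Your check that $N_V(x)\cup N_V(y)$ already contains $x$ and $y$ supplies the vertex count that the paper leaves implicit.
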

    \begin{proof}
        Since $\overline{G}$ contains no $K_{2,t}$, we find $|R_{xy}|\le t-1$. 
        So, $n\le d_x+d_y-|N_{xy}|+t-1$.
    \end{proof}

    \begin{claim}\label{Claim}
        For any vertex $v\in V\backslash \{x,y\}$, $v$ has at most $t-1$ non-neighbors in $A_x\cup R_{xy}$ and at most $t-1$ non-neighbors in $A_y\cup R_{xy}$.
    \end{claim}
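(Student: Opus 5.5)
The claim is a direct consequence of the $K_{2,t}$-freeness of $\overline{G}$, using $y$ (respectively $x$) as the second vertex of the forbidden $K_{2,2}$-side. I treat the statement for $A_x\cup R_{xy}$; the one for $A_y\cup R_{xy}$ follows by exchanging $x$ and $y$, since nothing in the argument uses $d_x\ge d_y$.

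First I observe that $y$ is adjacent in $\overline{G}$ to every vertex of $A_x\cup R_{xy}$. Indeed, by definition $A_x=N_V(x)\setminus(N_{xy}\cup\{y\})$ consists of vertices that are not neighbors of $y$, and none of them equals $y$. Likewise $R_{xy}=V\setminus(N_V(x)\cup N_V(y))$ contains no neighbor of $y$. It also does not contain $y$, because $y\in N_V(x)$. Hence every $u\in A_x\cup R_{xy}$ satisfies $u\neq y$ and $uy\notin E$, that is, $uy\in E(\overline{G})$.

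Now fix $v\in V\setminus\{x,y\}$ and suppose, for a contradiction, that $v$ has $t$ distinct non-neighbors $u_1,\dots,u_t$ in $A_x\cup R_{xy}$. Each $u_i\neq v$, since a non-neighbor is by definition distinct from $v$. Each $u_i\neq y$ by the observation above, and $v\neq y$ by assumption. So $\{v,y\}$ and $\{u_1,\dots,u_t\}$ are disjoint vertex sets. Moreover, every $u_i$ is adjacent in $\overline{G}$ to both $v$ and $y$. These vertices and edges therefore form a (not necessarily induced) copy of $K_{2,t}$ in $\overline{G}$, which contradicts the hypothesis. Hence $v$ has at most $t-1$ non-neighbors in $A_x\cup R_{xy}$.

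For $A_y\cup R_{xy}$ I run the same argument with $x$ in place of $y$: every vertex of $A_y\cup R_{xy}$ is distinct from $x$ and non-adjacent to it in $G$. The only delicate point in the whole argument is the disjointness check, namely that $v\notin\{u_i\}$ and that $x,y\notin A_x\cup A_y\cup R_{xy}$. This is exactly why the claim requires $v\notin\{x,y\}$; there is no real obstacle.
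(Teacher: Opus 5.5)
Your proof is correct and follows the paper's argument: $t$ non-neighbors of $v$ in $A_x\cup R_{xy}$, together with $v$ and $y$, would form a $K_{2,t}$ in $\overline{G}$, and the case $A_y\cup R_{xy}$ is the same with $x$ in place of $y$. Your explicit disjointness checks are details the paper leaves implicit.
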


    \begin{proof}
        If there exists a vertex $v\in V\backslash \{x,y\}$ with at least $t$ non-neighbors in $A_x\cup R_{xy}$, then these $t$ non-neighbors together with $v$ and $y$ form a $K_{2,t}$ in $\overline{G}$, which is a contradiction. 
        Similarly, any vertex $v\in V\backslash \{x,y\}$ has at most $t-1$ non-neighbors in $A_y\cup R_{xy}$.
    \end{proof}
    
    For any fixed $p\in \left[ \frac{1}{1+d_y},1 \right)$, it follows by Lemma \ref{simple} that there is a simple optimal transport plan from $\mu_x^p$ to $\mu_y^p$ such that 
    \begin{align}\label{assumption}
        \pi(x,y)=p-\frac{1-p}{d_y}.
    \end{align}

        We now prove that there is an injection $\phi_1$ from $A_y$ to $A_x$ such that $\pi(\phi_1(v),v)>0$ for any vertex $v\in A_y$.
        For any subset $S\subseteq A_y$, define $N(S):=\{ u\in A_x\ |\ \exists v\in S,\ \pi(u,v)>0 \}$.
        According to Theorem \ref{Marriage}, it suffices to show that $|N(S)|\ge |S|$ for any $S\subseteq A_y$. 
        By the choice of $\pi$, we have 
        $$\frac{(1-p)|N(S)|}{d_x}=\sum_{u\in N(S)}\mu_x^p(u)\ge \sum_{v\in S}\mu_y^p(v)=\frac{(1-p)|S|}{d_y}.$$
        By the assumption that $d_x\ge d_y$, we have $|N(S)|\ge |S|$. 
        Thus, such an injection $\phi_1$ exists.

    \begin{claim}\label{Ay}
        We have $|A_y|\le 2t.$
    \end{claim}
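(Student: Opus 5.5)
The plan is to argue by contradiction: assume $|A_y|\ge 2t+1$ and show that then $\kappa_{LLY}(x,y)>0$. The two ingredients are Lemma~\ref{m2}, applied to the simple optimal plan $\pi$ from \eqref{assumption}, and the injection $\phi_1$ just constructed.

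\textbf{Step 1 (all distances at most $2$).} First I would verify the hypothesis of Lemma~\ref{m2}: every $u\in A_x$ and $v\in A_y$ have $\rho(u,v)\le 2$. By Claim~\ref{Claim}, $u$ has at most $t-1$ non-neighbors in $A_y\cup R_{xy}$, and so does $v$. Hence $u$ and $v$ have at least $|A_y|-1-2(t-1)\ge 2$ common neighbors inside $A_y$ (the $-1$ removes $v$ itself), so $\rho(u,v)\le 2$. Lemma~\ref{m2} then gives
\[
\kappa_{LLY}(x,y)\ge \frac{1}{d_y}+\frac{|N_{xy}|+1}{d_x}-\frac{m_2}{1-p}.
\]
So it remains to bound $m_2$, the mass moved along distance-$2$ pairs.

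\textbf{Step 2 (structure of the distance-$2$ mass).} The quantity $m_2$ splits into three parts:
\begin{itemize}
\item the mass $\sum_{u\in A_x}\pi(u,y)$, which equals $(1-p)(1/d_y-1/d_x)$;
\item the mass on non-adjacent pairs $(u,w)$ with $u\in A_x$, $w\in N_{xy}$;
\item the mass on non-adjacent pairs $(u,v)$ with $u\in A_x$, $v\in A_y$.
\end{itemize}
The key tool is Lemma~\ref{adjust}. Take two positive-mass pairs $(u,v)$ and $(u',v')$ with $u\ne u'$, $v\ne v'$ and $uv,u'v'\notin E$. Optimality forces $4\le \rho(u,v')+\rho(u',v)$, so $uv'\notin E$ or $u'v\notin E$. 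Thus the ``bad'' pairs carrying mass pairwise witness extra non-edges between $A_x$ and $A_y\cup N_{xy}$.

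Suppose there are $b$ bad pairs with distinct first and distinct second coordinates. Counting non-edges gives at least $b+\binom{b}{2}$ of them. On the other hand, Claim~\ref{Claim} bounds the non-neighbors of each $u\in A_x$ inside $A_y$ by $t-1$. For non-neighbors inside $N_{xy}$, I would use $K_{2,t}$-freeness with the pair $\{u,y\}$ together with Claim~\ref{Claim}, which should bound them by a constant multiple of $t$. This yields $b=O(t)$, roughly $b\le 2t$. One can then pass from pairs to mass: each vertex of $A_x$ carries mass $(1-p)/d_x$, so the total bad mass is at most about $2t(1-p)/d_x$.

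\textbf{Step 3 (conclusion).} Combining the bounds gives
\[
\kappa_{LLY}(x,y)\ge \frac{1}{d_y}+\frac{|N_{xy}|+1}{d_x}-\Big(\frac{1}{d_y}-\frac{1}{d_x}\Big)-\frac{c\,t}{d_x}=\frac{|N_{xy}|+2-ct}{d_x}.
\]
To finish, I need $|N_{xy}|$ large. By Claim~\ref{sum} and $n\ge 8t$ one has $d_x+d_y-|N_{xy}|\ge 7t+1$, but this by itself does not force $|N_{xy}|$ to be large. So in the complementary case where $N_{xy}$ is small, I would instead exploit $|A_y|>2t$ directly. Via $\phi_1$, each $v\in A_y$ receives mass from $\phi_1(v)$, and most of these pairs must be adjacent, since the bad ones number at most about $2t$. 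Those adjacent pairs cost distance $1$, which leaves enough slack to beat the $\frac1{d_y}-\frac1{d_x}$ loss. Either way this contradicts $\kappa_{LLY}(x,y)\le 0$.

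\textbf{Main obstacle.} I expect the hard step to be Step 2: turning the pairwise ``crossing'' obstruction from Lemma~\ref{adjust} into a sharp enough mass bound. The difficulty is that bad pairs may share endpoints, and that non-neighbors of $u$ inside $N_{xy}$ are not controlled by Claim~\ref{Claim}. My first attempt would be to take a maximal family of bad pairs with distinct endpoints, using Hall's theorem (Theorem~\ref{Marriage}) as in the construction of $\phi_1$ to extract it. I would then show that the remaining bad mass is dominated by this family, and pin down the constant so that the threshold comes out to exactly $2t$.
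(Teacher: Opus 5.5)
\textbf{There is a genuine gap, and it is structural rather than a matter of constants.} Once all distances are at most $2$, Lemma~\ref{m2} already charges every unit of transported mass distance at least $1$. After the forced cost $(1-p)(1/d_y-1/d_x)$ of sending $A_x$ to $y$, the only remaining budget is $(|N_{xy}|+2)(1-p)/d_x$. So when $|N_{xy}|$ is small, you must show there is essentially \emph{no} distance-$2$ mass from $A_x$ into $A_y\cup N_{xy}$.

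An $O(t)$ bound on the number of bad pairs cannot give this. Such a bound is all you get from crossing bad pairs with each other. Your remark that the adjacent pairs $(\phi_1(v),v)$ ``cost distance $1$ and leave enough slack'' is also not correct: there is no slack beyond what Lemma~\ref{m2} already counts, however many good pairs there are. Step 3 therefore does not close in the small-$|N_{xy}|$ case.

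Separately, $K_{2,t}$-freeness applied to $\{u,y\}$ says nothing about non-neighbours of $u$ in $N_{xy}$, because $N_{xy}\subseteq N_V(y)$.

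\textbf{The missing idea is to cross the single bad pair with the good pairs supplied by $\phi_1$, not with other bad pairs.} By Corollary~\ref{basic} there are $z\in A_x$ and $w\in A_y\cup N_{xy}$ with $\pi(z,w)>0$ and $\rho(z,w)\ge 2$. Take any $v\in A_y\setminus\{w,\phi_1^{-1}(z)\}$. Applying Lemma~\ref{adjust} to the pairs $(\phi_1(v),v)$ and $(z,w)$ gives
\[
3\le \rho(\phi_1(v),v)+\rho(z,w)\le \rho(\phi_1(v),w)+\rho(z,v).
\]
Hence $zv\notin E$ or $w\phi_1(v)\notin E$.

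Since $\phi_1$ is injective, this produces at least $|A_y|-2$ vertices, each a non-neighbour of $z$ in $A_y$ or a non-neighbour of $w$ in $A_x$. Claim~\ref{Claim} caps the total number of such non-neighbours at $2(t-1)$, so $|A_y|\le 2t$.

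This argument needs no quantitative curvature estimate, no Step~1, and no information about $|N_{xy}|$. Those are exactly the ingredients your route gets stuck on.
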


    \begin{proof}
        By Corollary \ref{basic} and the assumption that $\kappa_{LLY}(x,y)\le 0$, there exist $z\in A_x$ and $w\in A_y\cup N_{xy}$ such that $\pi(z,w)>0$ and $\rho(z,w)\ge 2$.
        
        We first show that, for any $v\in A_y\backslash \{w, \phi_1^{-1}(z)\}$, we have $\rho(z,v)\ge 2$ or $\rho(w,\phi_1(v))\ge 2$.
        For a contradiction, assume that there exists $v\in A_y\backslash \{w, \phi_1^{-1}(z)\}$ such that $zv\in E$ and $w\phi_1(v)\in E$.
        Then, applying Lemma~\ref{adjust} with vertices $\phi_1(v),v,z,w$ leads to
        \begin{align*}
        3\leq \rho(\phi_1(v),v)+\rho (z,w)\le \rho(\phi_1(v),w)+\rho (z,v)=2, 
    \end{align*}
    which is a contradiction.

    Therefore, the sum of the number of non-neighbors of $z$ in $A_y$ and the number of non-neighbors of $w$ in $A_x$ is at least $|A_y\backslash \{w, \phi_1^{-1}(z)\}|\ge |A_y|-2$.
    On the other hand, Claim \ref{Claim} implies that this sum is at most $2(t-1)$. 
    It follows that $|A_y|\le 2t$.
    \end{proof}
    
    \begin{claim}\label{Claim3}
        For any $u\in A_x$ and $v\in N_{xy}\cup A_y$, we have $\rho(u,v)\le 2$.
    \end{claim}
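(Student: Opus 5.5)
The plan is to reduce to pairs $u\in A_x$, $v\in A_y$ with $uv\notin E$, and then count non-neighbours using $K_{2,t}$-freeness together with the size bound $n\ge\max\{t^2-2t+2,8t\}$. The case $v\in N_{xy}$ is immediate: $u\in A_x$ and $v\in N_{xy}$ are both adjacent to $x$, so $\rho(u,v)\le 2$. So let $u\in A_x$, $v\in A_y$, assume $uv\notin E$, and suppose for contradiction that $u$ and $v$ have no common neighbour. Every vertex $a\notin\{u,v\}$ then falls into exactly one of three classes: adjacent to $u$ only, adjacent to $v$ only, or adjacent to neither. The last class has size at most $t-1$, since otherwise $u,v$ together with these vertices span a $K_{2,t}$ in $\overline{G}$.

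\emph{Step 1: bounding $A_x\cup A_y\cup R_{xy}$.} By Claim~\ref{Claim}, $v$ has at most $t-1$ non-neighbours in $A_x\cup R_{xy}$. Hence all but at most $t-1$ vertices of $(A_x\setminus\{u\})\cup R_{xy}$ are adjacent to $v$, and so, since there is no common neighbour, they are non-adjacent to $u$. Applying Claim~\ref{Claim} to $u$ and the set $A_x\cup R_{xy}$ gives
\[
|A_x|-1+|R_{xy}|\le 2(t-1).
\]
The symmetric argument gives $|A_y|-1+|R_{xy}|\le 2(t-1)$; this is consistent with Claim~\ref{Ay}. Consequently $|A_x|,|A_y|\le 2t-1$. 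Combining this with $n=|N_{xy}|+|A_x|+|A_y|+|R_{xy}|+2$ and $n\ge 8t$ gives $|N_{xy}|\ge 8t-(4t-2)-(t-1)-2>3t$, so $N_{xy}$ is large.

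\emph{Step 2: splitting $N_{xy}$.} Write $N_{xy}=N_u\sqcup N_v\sqcup N_0$, where $N_u$ is the set of vertices non-adjacent to $u$ but adjacent to $v$, $N_v$ is the symmetric set, and $N_0$ is the set of vertices adjacent to neither; no vertex of $N_{xy}$ is adjacent to both. We have $|N_0|\le t-1$. The aim is to show that $N_u$ and $N_v$ are both small, contradicting $|N_{xy}|>3t$.

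The first idea is to find a second vertex whose common non-neighbourhood with $u$ (respectively $v$) contains $N_u$ (respectively $N_v$). For example, take $b\in N_v$. Then every vertex of $N_u$ is non-adjacent to $u$, and I would try to show that most of $N_u$ is also non-adjacent to some fixed vertex of $A_x$ or of $A_y$. The fallback is to exploit the transport plan instead, as in Claim~\ref{Ay}. Use $\phi_1$ and the edge $z,w$ with $\rho(z,w)\ge 2$, and apply Lemma~\ref{adjust}. A distance-$3$ pair carrying positive mass forces, via the swap inequality, many further non-adjacencies among $A_x$, $A_y$ and $N_{xy}$, and these are then bounded by Claim~\ref{Claim}. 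I also expect to need the curvature assumption $\kappa_{LLY}(x,y)\le 0$ together with Lemma~\ref{lowerbound}: once $|A_x|,|A_y|\le 2t-1$ while $|N_{xy}|>3t$, the bound in Lemma~\ref{lowerbound} becomes positive. This means the distance-$3$ configuration must in fact be excluded in order to reach a contradiction.

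\emph{Main obstacle.} The hardest step is Step 2, namely controlling $N_{xy}$. Claim~\ref{Claim} only constrains non-neighbours inside $A_x\cup R_{xy}$ and $A_y\cup R_{xy}$, so it says nothing about $N_{xy}$. I would first try to sidestep this step altogether. Steps 1 and 2 already give strong size bounds; I would then argue that Lemma~\ref{lowerbound} yields positive curvature once $|N_{xy}|$ is large relative to $|A_x|$ and $|A_y|$, and reserve the $t^2-2t+2$ part of the hypothesis for the regime where $|N_{xy}|$ is small.
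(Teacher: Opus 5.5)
Your Step 1 is correct and is exactly the pigeonhole count in the paper, and you are right that Lemma~\ref{lowerbound} should finish the job. But the proposal never actually closes. You leave Step 2 open as the ``main obstacle'', and the implication you offer for the sidestep is false as stated. With $|A_x|=|A_y|=2t-1$ and $|N_{xy}|=3t+1$, the bound of Lemma~\ref{lowerbound} is $\frac{6t+4}{5t+1}-\frac{7t-1}{5t+1}=\frac{5-t}{5t+1}\le 0$ for $t\ge 5$.

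The missing step is to extract the exact constraint from $\kappa_{LLY}(x,y)\le 0$. Write $\frac{|N_{xy}|+2|A_y|}{|N_{xy}|+|A_y|+1}=2-\frac{|N_{xy}|+2}{|N_{xy}|+|A_y|+1}$ and use $|A_y|\le|A_x|$; then Lemma~\ref{lowerbound} gives $|N_{xy}|\le 2|A_x|-2$. Now combine this with the bound on the union that Step 1 actually proves, namely $|A_x|+|R_{xy}|\le 2t-1$. You weakened this by bounding $|A_x|$ and $|R_{xy}|$ separately, which is why you only got $|N_{xy}|>3t$ rather than $|N_{xy}|\ge n-4t\ge 4t$. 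The combination gives
\[
n=2+|A_x|+|R_{xy}|+|N_{xy}|+|A_y|\le 4|A_x|+|R_{xy}|\le 4(2t-1)<8t,
\]
contradicting $n\ge 8t$.

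Hence none of the following is needed: Step 2, the decomposition $N_{xy}=N_u\sqcup N_v\sqcup N_0$, the transport-plan fallback, or the hypothesis $n\ge t^2-2t+2$. There is no leftover regime with small $|N_{xy}|$.

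The paper uses the same two ingredients in the opposite order. It first uses Lemma~\ref{lowerbound} and $n\ge 8t$ to get $|A_x\cup R_{xy}|\ge 2t$. Then, assuming $\rho(u_0,v_0)\ge 3$, it notes that each of the at least $2t-1$ vertices of $(A_x\cup R_{xy})\setminus\{u_0\}$ is non-adjacent to $u_0$ or to $v_0$. So one of $u_0,v_0$ has $t$ non-neighbours there, contradicting Claim~\ref{Claim}. This treats $v_0\in N_{xy}$ and $v_0\in A_y$ uniformly, though your separate one-line treatment of $v\in N_{xy}$ is also fine.
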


    \begin{proof}
    We first prove that $|A_x\cup R_{xy}|\ge 2t$. By Lemma~\ref{lowerbound}, we have
    \begin{align*}
        0\ge \kappa_{LLY}(x,y)&\ge \frac{2|N_{xy}|+2}{|N_{xy}|+|A_x|+1} -\frac{|N_{xy}|+2|A_y|}{|N_{xy}|+|A_y|+1}\\
        &= \frac{2|N_{xy}|+2}{|N_{xy}|+|A_x|+1} -2+\frac{|N_{xy}|+2}{|N_{xy}|+|A_y|+1}\ge \frac{3|N_{xy}|+4}{|N_{xy}|+|A_x|+1} -2,
    \end{align*}
    which implies $|N_{xy}|\le 2|A_x|-2\le 2|A_x\cup R_{xy}|-2$. 
    Since $|A_y|\le |A_x|$, we derive
    $$n=2+|A_x\cup R_{xy}|+|N_{xy}|+|A_y|\le 4|A_x\cup R_{xy}|.$$
    By the assumption that $n\ge 8t$, we find $|A_x\cup R_{xy}|\ge 2t$. 
    
        For a contradiction, assume that there are two vertices $u_0\in A_x$ and $v_0\in N_{xy}\cup A_y$ with $\rho(u_0,v_0)\ge 3$.
        Then, each vertex in $(A_x\cup R_{xy})\backslash \{u_0\}$ is not adjacent to at least one of $u_0$ and $v_0$.
        Since $|A_x\cup R_{xy}|\ge 2t$, at least one of $u_0$ and $v_0$ is not adjacent to at least $t$ vertices in $(A_x\cup R_{xy})\backslash \{u_0\}$, which contradicts Claim \ref{Claim}.
    \end{proof}

    Set $X_1:=\{u\in A_x\ |\ \exists\, v\in V,\ \pi(u,v)>0,\ \rho(u,v)= 2 \}$, $Y_1:=N_{N_{xy}\cup A_y}(X_1)$, $X_2:=\{u\in A_x\ |\ \exists\, v\in Y_1, \pi(u,v)>0 \}$, and $Y_2:=\{v\in N_{xy}\cup A_y\ |\ \exists\, u\in A_x,\ \pi(u,v)>0,\ \rho(u,v)= 2 \}$.

    \begin{claim}\label{overlap}
        We have $Y_1\cap Y_2=\emptyset$.
    \end{claim}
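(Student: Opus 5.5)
Suppose, for a contradiction, that some vertex $v\in Y_1\cap Y_2$ exists; we unpack the two memberships and derive a contradiction from Lemma~\ref{adjust}. Membership $v\in Y_2$ gives a vertex $u\in A_x$ with $\pi(u,v)>0$ and $\rho(u,v)=2$; in particular $u\in X_1$. Membership $v\in Y_1=N_{N_{xy}\cup A_y}(X_1)$ gives a vertex $u'\in X_1$ with $u'v\in E$. Since $u'\in X_1$, there is a vertex $w'$ with $\pi(u',w')>0$ and $\rho(u',w')=2$. Because $\pi$ is simple and $u'\in A_x$ has $\mu_y^p(u')=0$, every target of $u'$ lies in $N_{xy}\cup A_y\cup\{y\}$; so $w'\in N_{xy}\cup A_y\cup\{y\}$.

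The key step is to apply Lemma~\ref{adjust} to the pairs $(u,v)$ and $(u',w')$:
\[
\rho(u,v)+\rho(u',w')\le \rho(u,w')+\rho(u',v).
\]
The left side equals $4$, and $\rho(u',v)=1$. Hence $\rho(u,w')\ge 3$. However, $u\in A_x$ is adjacent to $x$. If $w'=y$, then $\rho(u,w')\le 2$. If $w'\in N_{xy}\cup A_y$, then Claim~\ref{Claim3} gives $\rho(u,w')\le 2$. Either way we contradict $\rho(u,w')\ge 3$.

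The one subtle point is the degenerate case $u=u'$. Then $u$ would be adjacent to $v$, which contradicts $\rho(u,v)=2$. So $u\neq u'$, although Lemma~\ref{adjust} does not require distinctness anyway. The only other thing to check is that $\rho(u,v)=2$ indeed forces $u\in X_1$, which is immediate from the definition of $X_1$. I expect no real obstacle: the argument is a single exchange step that combines Lemma~\ref{adjust} with the diameter bound of Claim~\ref{Claim3}.
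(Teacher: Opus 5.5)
Your proof is correct and follows the paper's argument. It makes the same single application of Lemma~\ref{adjust} to the pairs $(u,v)$ and $(u',w')$, where $\rho(u',v)=1$ and $\rho(u,w')\le 2$ force $4\le 3$; the only difference is that you spell out why $\rho(u,w')\le 2$ (by locating $w'$ in $N_{xy}\cup A_y\cup\{y\}$ and invoking Claim~\ref{Claim3}), which the paper uses without comment.
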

    \begin{proof}
        Assume, for a contradiction, that there exist $u\in A_x$ and $v\in Y_1$ with $\pi(u,v)>0$ and $\rho(u,v)= 2$.
        By the definition of $Y_1$, there exists $z\in X_1$ such that $zv\in E$.
        By the definition of $X_1$, there exists $w\in V$ such that $\pi(z,w)>0$ and $\rho(z,w)=2$. 
        Applying Lemma \ref{adjust} with $z,w,u,v$ yields 
        $$4= \rho(u,v)+\rho (z,w)\le \rho(u,w)+\rho (z,v)\le 3,$$
        which is a contradiction.
    \end{proof}

    \begin{claim}\label{count}
        For any $w\in Y_2$ and $u\in X_1\cup X_2$, we have $uw\notin E$.
    \end{claim}

    \begin{proof}
        For a contradiction, assume that $uw\in E$.
        By the definition of $Y_2$, there exists $z\in X_1$ such that $\pi(z,w)>0$ and $\rho(z,w)=2$.
        If $u\in X_1$, then $w\in Y_1$, which is contradictory to Claim \ref{overlap}. 
        So, $u\in X_2$.
        Then there exists $v\in Y_1$ and $z'\in X_1$ such that $\pi(u,v)>0$ and $z'v\in E$. 
        Since $z'\in X_1$, there exists $w'\in V$ such that $\pi(z',w')>0$ and $\rho(z',w')=2$.
        Let $\epsilon:=\frac{1}{3}\min\{\pi(u,v),\pi(z,w),\pi(z',w')\}$. 
        Then $\epsilon>0$. 
        Consider the transport plan $\pi_1$ defined as $$\pi_1:=\pi-\epsilon(\delta_{zw}+\delta_{uv}+\delta_{z'w'}-\delta_{zw'}-\delta_{uw}-\delta_{z'v}).$$
        It follows that
        \begin{align*}
            &\sum_{a,b\in V}\rho(a,b)\pi(a,b)-\sum_{a,b\in V}\rho(a,b)\pi_1(a,b)\\
            =\ &\epsilon (\rho(z,w)+\rho (u,v)+\rho(z',w')-\rho (z,w')-\rho (u,w)-\rho (z',v))\\
            \ge\ &\epsilon (2+1+2-2-1-1)>0,
        \end{align*}
        which is contradictory to the assumption that $\pi$ is optimal.
    \end{proof}
    
    According to Claim \ref{Claim3}, Lemma \ref{m2}, and the assumption that $\kappa_{LLY}(x,y)\le 0$, we deduce that
    \begin{align}\label{eqm2}
        \frac{1-p}{d_y}+\frac{(|N_{xy}|+1)(1-p)}{d_x}\le m_2,
    \end{align}
    where $m_2$ is defined as in Lemma \ref{m2}.
    
    Note that Corollary \ref{basic} implies $Y_2\ne \emptyset$. Let $w_1$ be a vertex in $Y_2$. By Claim \ref{count}, $w_1$ has at least $|X_1\cup X_2|$ non-neighbors in $A_x$.
    Then Claim \ref{Claim} gives
    \begin{align}\label{t1}
        |X_1\cup X_2|\le t-1.
    \end{align}

    According to Claim \ref{overlap}, all mass transported into $Y_1$ is transported over distance 1.
    Therefore, by the definitions of $X_1$ and $X_2$, we derive
    \begin{align*}
        |X_1\cup X_2|\frac{1-p}{d_x}=\sum_{u\in X_1\cup X_2}\sum_{v\in V} \pi(u,v)\ge m_2+\sum_{u\in X_2}\sum_{v\in Y_1} \pi(u,v).
    \end{align*}
    Combined with inequalities \eqref{eqm2} and \eqref{t1}, we have
    \begin{align}\label{mid}
        \frac{(t-1)(1-p)}{d_x}\ge \frac{1-p}{d_y}+\frac{(|N_{xy}|+1)(1-p)}{d_x}+ \sum_{u\in X_2}\sum_{v\in Y_1} \pi(u,v).
    \end{align}
    Note that
    $$\sum_{u\in X_2}\sum_{v\in Y_1} \pi(u,v)=|Y_1\backslash N_{xy}|\frac{1-p}{d_y}+|Y_1\cap N_{xy}| \left(\frac{1-p}{d_y}-\frac{1-p}{d_x}\right) \ge |Y_1|\frac{1-p}{d_y}-|N_{xy}|\frac{1-p}{d_x}.$$
    It follows that 
    \begin{align*}
        \frac{(t-1)(1-p)}{d_x}\ge \frac{1-p}{d_y}+\frac{(|N_{xy}|+1)(1-p)}{d_x}+ |Y_1|\frac{1-p}{d_y}-|N_{xy}|\frac{1-p}{d_x},
    \end{align*}
    which is equivalent to
    \begin{align}\label{eq1}
        d_x\le \frac{(t-2)d_y}{|Y_1|+1}.
    \end{align}

    \begin{claim}\label{X1}
        We have $|X_1|\ge \dfrac{d_x}{d_y}+|N_{xy}|+1.$
    \end{claim}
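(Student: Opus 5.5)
The plan is to compare the two sides of inequality \eqref{eqm2} by identifying exactly which source vertices can carry mass over distance $2$ in the plan $\pi$. Once we know that every such unit of mass starts in $X_1$, we bound $m_2$ above by the total mass sitting on $X_1$ under $\mu_x^p$. That gives a linear inequality in $|X_1|$ which rearranges to the claim.

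\textbf{Step 1: only vertices of $A_x$ send mass off-diagonally.} Since $\pi$ is simple, every vertex $u$ keeps $\min\{\mu_x^p(u),\mu_y^p(u)\}$ at itself, so its off-diagonal outflow is $\mu_x^p(u)-\min\{\mu_x^p(u),\mu_y^p(u)\}$. We check the possible sources one by one.
\begin{itemize}
\item For $u\in N_{xy}$ we have $\mu_x^p(u)=\frac{1-p}{d_x}\le\frac{1-p}{d_y}=\mu_y^p(u)$, because $d_x\ge d_y$. So $u$ sends nothing away.
\item The vertex $y$ has $\mu_x^p(y)=\frac{1-p}{d_x}<p=\mu_y^p(y)$, so it also sends nothing.
\item The vertex $x$ sends only the amount $\pi(x,y)$ given by \eqref{assumption}, and $\rho(x,y)=1$.
\end{itemize}
Hence every pair $(u,v)$ with $\pi(u,v)>0$ and $\rho(u,v)=2$ has $u\in A_x$. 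By the definition of $X_1$, such a $u$ lies in $X_1$. Claim~\ref{Claim3} is not even needed for this step, since $X_1$ is defined using pairs at distance exactly $2$ with any target $v\in V$.

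\textbf{Step 2: bound $m_2$ from above.} By Step 1,
\[
m_2=\sum_{u\in X_1}\sum_{\rho(u,v)=2}\pi(u,v)\le\sum_{u\in X_1}\sum_{v\in V}\pi(u,v)=|X_1|\frac{1-p}{d_x},
\]
where the last equality uses $\mu_x^p(u)=\frac{1-p}{d_x}$ for every $u\in A_x$.

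\textbf{Step 3: combine with \eqref{eqm2}.} Inequality \eqref{eqm2} gives
\[
\frac{1-p}{d_y}+\frac{(|N_{xy}|+1)(1-p)}{d_x}\le |X_1|\frac{1-p}{d_x}.
\]
Dividing by $1-p>0$ and multiplying by $d_x$ yields $|X_1|\ge \frac{d_x}{d_y}+|N_{xy}|+1$.

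The only delicate point is Step 1: making sure no distance-$2$ mass originates outside $A_x$. This rests on simplicity of $\pi$ together with $d_x\ge d_y$, which makes each vertex of $N_{xy}$ a net receiver.
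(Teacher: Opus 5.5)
Your proof is correct and follows the paper's argument: bound $m_2$ by the total $\mu_x^p$-mass on $X_1$, namely $|X_1|\frac{1-p}{d_x}$, and combine this with \eqref{eqm2}. Your Step 1 spells out what the paper leaves implicit, namely that simplicity of $\pi$ and $d_x\ge d_y$ force all distance-$2$ mass to leave from $A_x$. One small slip: $\mu_x^p(y)\le\mu_y^p(y)$ can hold with equality, for instance when $p=\frac{1}{1+d_y}$ and $d_x=d_y$, so it is not strict in general, but this does not affect the conclusion.
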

    \begin{proof}
        By the definition of $X_1$, we have 
    $$m_2\le \sum_{u\in X_1} \sum_{v\in V}\pi(u,v)= \frac{|X_1|(1-p)}{d_x},$$
    which together with inequality \eqref{eqm2} gives 
    the desired estimate.
    \end{proof}
    
    Let $z$ and $z'$ be any two vertices in $X_1$ (By Claim \ref{X1}, such two vertices exist). By the definition of $Y_1$, every vertex in $(N_{xy}\cup A_y)\backslash Y_1$ is a common non-neighbor of $z$ and $z'$.
    Since $\overline{G}$ contains no $K_{2,t}$, $z$ and $z'$ have at most $t-2$ common non-neighbors except $y$. Hence, we have $|N_{xy}\cup A_y|-|Y_1|\le t-2$. That is,
    \begin{align}\label{n12}
        d_y\le |Y_1|+t-1.
    \end{align}
    
    Recall from Claim \ref{sum} that $n\le d_x+d_y-|N_{xy}|+t-1$.
    Combined with inequalities \eqref{eq1} and \eqref{n12}, this leads to
    \begin{align}\notag
        n&\le \left(\frac{t-2}{|Y_1|+1}+1 \right)(|Y_1|+t-1)-|N_{xy}|+t-1\\ \label{eq3}
        &= \frac{(t-2)^2}{|Y_1|+1}+|Y_1|-|N_{xy}|+3t-4.
    \end{align}
    
    \begin{claim}\label{Y1}
        We have $Y_1=\emptyset$.
    \end{claim}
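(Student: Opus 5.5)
The plan is to argue by contradiction: assume $Y_1\neq\emptyset$, so $|Y_1|\ge 1$, and show that inequality \eqref{eq3} then contradicts the hypothesis $n\ge\max\{t^2-2t+2,8t\}$. The first step is to get an upper bound on $s:=|Y_1|$ from \eqref{eq1}. Since $d_x\ge d_y$ by our normalization, \eqref{eq1} gives
\[
d_y\le d_x\le \frac{(t-2)d_y}{s+1}.
\]
As $d_y\ge 1$, this forces $s+1\le t-2$, that is, $1\le s\le t-3$. For $t\in\{2,3,4\}$ this interval is empty, so $Y_1\neq\emptyset$ is already impossible and the claim holds outright. From now on I assume $t\ge 5$.

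Next I bound the right-hand side of \eqref{eq3}. I drop the term $-|N_{xy}|\le 0$ and consider
\[
f(s)=\frac{(t-2)^2}{s+1}+s .
\]
This function is convex in $s$ on $[1,t-3]$, so its maximum over that interval is attained at an endpoint:
\[
f(1)=\frac{(t-2)^2}{2}+1,\qquad f(t-3)=2t-5.
\]
Substituting into \eqref{eq3} yields
\[
n\le \max\left\{\frac{(t-2)^2}{2}+3t-3,\; 5t-9\right\}.
\]

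It remains to compare this with the hypothesis on $n$. The second value satisfies $5t-9<8t\le n$. For the first value, one computes
\[
t^2-2t+2-\left(\frac{(t-2)^2}{2}+3t-3\right)=\frac{t^2}{2}-3t+3,
\]
which is positive for $t\ge 5$; indeed it equals $1/2$ at $t=5$ and is increasing for $t\ge 3$. Hence
\[
n\le \frac{(t-2)^2}{2}+3t-3<t^2-2t+2\le n,
\]
a contradiction. Therefore $Y_1=\emptyset$.

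The only delicate point I expect is making sure that the bound $s\le t-3$ really follows, which needs $d_y>0$. This holds because $y$ is adjacent to $x$. I also need to check the small cases $t\le 4$; these are exactly where the quadratic comparison fails (at $t=4$ the difference is $-1$), and they are handled by the empty range for $s$ rather than by that comparison. If the convexity and endpoint argument turns out to be loose somewhere, the fallback is the extra slack from $-|N_{xy}|$ together with Claim~\ref{Ay}, but I do not expect to need it.
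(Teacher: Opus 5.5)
Your route is correct and simpler than the paper's, but the case $t=4$ is mishandled as written.

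\textbf{Comparison with the paper.} The paper does not extract an upper bound on $|Y_1|$ from \eqref{eq1}. It splits into two cases:
\begin{itemize}
\item $|Y_1|\le t-3$. The right-hand side of \eqref{eq3} is decreasing there, which gives $n\le \frac{(t-2)^2}{2}+3t-3<\max\{t^2-2t+2,3t\}$.
\item $|Y_1|>t-3$. The paper rules this out using Claim~\ref{Ay}: $|A_y|\le 2t$ gives $|Y_1|-|N_{xy}|\le 2t$, and then \eqref{eq3} gives $n<6t-6$.
\end{itemize}
Your observation that $d_y\le d_x\le (t-2)d_y/(|Y_1|+1)$ forces $|Y_1|+1\le t-2$ shows that the second case is vacuous. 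Claim~\ref{Ay} is invoked nowhere else in the proof of Theorem~\ref{main2}. So your argument would let one delete Claim~\ref{Ay}, the injection $\phi_1$, and the appeal to Hall's theorem altogether. Using convexity with an endpoint check instead of monotonicity is an immaterial difference.

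\textbf{The $t=4$ slip.} You assert that the range $1\le s\le t-3$ is empty for $t\in\{2,3,4\}$. For $t=4$ it is $\{1\}$, so $s=1$ is not excluded. At $t=4$ your bound gives $n\le 11$, while $t^2-2t+2=10$. As you computed yourself, the comparison with $t^2-2t+2$ therefore yields no contradiction, and nothing in your write-up covers this case.

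The fix is to use the other half of the hypothesis, $n\ge 8t=32>11$. More cleanly, compare your bound against $\max\{t^2-2t+2,8t\}$ rather than $t^2-2t+2$ alone. Since $\frac{(t-2)^2}{2}+3t-3<8t$ for every $t\le 14$, this disposes of $t=4$ immediately. Your quadratic comparison handles $t\ge 5$, and $t\in\{2,3\}$ are genuinely excluded by the empty range.
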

    \begin{proof}
        For a contradiction, assume that $|Y_1|\ge 1$.
        Note that the value of expression \eqref{eq3} decreases as $|Y_1|$ increases when $|Y_1|\le t-3$.
         So, if $|Y_1|\le t-3$, then inequality \eqref{eq3} implies 
            $$n\le \frac{(t-2)^2}{1+1}+1-|N_{xy}|+3t-4< \max\{t^2-2t+2, 3t\},$$  
        contradicting the lower bound of $n$. Thus, $|Y_1|> t-3$. Recall from Claim \ref{Ay} that $|A_y|\le 2t$. Since $Y_1\subseteq N_{xy}\cup A_y$, we find $|Y_1|-|N_{xy}|\le 2t$. Thus, by inequality \eqref{eq3}, we derive
        $$n< (t-2)+2t+3 t-4\le \max\{t^2-2t+2, 5t\},$$
        also a contradiction. This completes the proof.
    \end{proof}

        We now prove that $|A_y|\ge 2$.
        By inequality \eqref{mid}, we have
        \begin{align*}
        \frac{(t-1)(1-p)}{d_x}\ge \frac{1-p}{d_y}+\frac{(|N_{xy}|+1)(1-p)}{d_x},
    \end{align*}
    which is equivalent to $d_x\le (t-|N_{xy}|-2)d_y$.
    Substituting it into Claim \ref{sum} yields
    \begin{align*}
        n\le (t-|N_{xy}|-1)d_y-|N_{xy}|+t-1.
    \end{align*}
    If $|A_y|\le 1$, then $d_y\le |N_{xy}|+2$, and hence
    \begin{align*}
        n&\le (t-|N_{xy}|-1)(|N_{xy}|+2)-|N_{xy}|+t-1\\ 
        &=-\left(|N_{xy}| - \frac{t}{2} + 2\right)^2 + \frac{(t + 2)}{4}^2\le\frac{(t + 2)}{4}^2< \max\{t^2-2t+2, 3t\},
    \end{align*}
    which contradicts the lower bound on $n$. 
    Thus, $|A_y|\ge 2$.

    Let $w_2$ and $w_3$ be two vertices in $A_y$ (Since $|A_y|\ge 2$, such two vertices exist). By Claim \ref{Y1} and the definition of $Y_1$, every vertex in $X_1$ is a common non-neighbor of $w_2$ and $w_3$. Since $\overline{G}$ has no $K_{2,t}$, $w_2$ and $w_3$ have at most $t-2$ common non-neighbors except $x$. Thus, we obtain $|X_1|\le t-2$. 
    It follows by Claim \ref{X1} that
    \begin{align*}
        \frac{d_x}{d_y}+|N_{xy}|+1 \le t-2.
    \end{align*}
    That is, $d_x\le (t-|N_{xy}|-3)d_y$. Substituting it into Claim \ref{sum} yields
    \begin{align*}
        n\le (t-|N_{xy}|-2)d_y-|N_{xy}|+t-1.
    \end{align*}
    By Claim \ref{Y1} and inequality \eqref{n12}, we have $d_y\le t-1$. It follows that
    \begin{align*}
        n\le (t-|N_{xy}|-2)(t-1)-|N_{xy}|+t-1\le t^2-2t+1,
    \end{align*}
    which contradicts the lower bound of $n$. This completes the proof.
\end{proof}

\section{Sharpness discussions}\label{section:sharpness}

In this section, we present some examples to illustrate that our results cannot be extended to the more general cases.

Let $G=(V,E)$ be the graph  of order $n$ defined as follows.
The vertex set \[V:=\{x,y\}\sqcup A\sqcup B\sqcup C\sqcup D,\] where $|A|=a$, $|B|=b$, $|C|=c$, $|D|=d$ such that $a+b+c+d+2=n$.
The edge set \[E:=\{xy\}\sqcup E_1\sqcup E_2\sqcup E_3 \sqcup E_4,\] where 
\begin{align*}
    &E_1:=\{xv | v\in A \cup B\}\cup\{uv | u, v\in A\cup B\text{ and } u\neq v\},\\
    &E_2:=\{yv | v\in C\}\cup\{uv | u, v\in C\text{ and } u\neq v\},\\
    &E_3:=\{uv| u\in B,\ v\in C\},\\
    &E_4:=\{uv | u\in D,\ v\in A\cup B\cup C\}\cup \{uv| u, v\in D \text{ and } u\neq v\}.
\end{align*}
In particular, the induced subgraphs of the vertex sets $A, B, C,D$ are isomorphic to complete graphs $K_a, K_b, K_c, K_d$, respectively. A schematic of $G$ is depicted in Figure \ref{fig:enter-label}.
\begin{figure}[htbp]
    \centering
    \begin{tikzpicture}[scale=1.2, every node/.style={font=\Large}]
\coordinate (x) at (0.4,2);    
\coordinate (y) at (0.4,0.8);
\node[draw,circle,minimum size=1cm] (Ka) at (2.2,2.8) {$K_a$};    
\node[draw,circle,minimum size=1cm] (Kb) at (2.2,1.5) {$K_b$};    
\node[draw,circle,minimum size=1cm] (Kc) at (2.2,0.2) {$K_c$};    
\node[draw,circle,minimum size=1cm] (Kd) at (4.2,1.5) {$K_d$};
\fill (x) circle (1.5pt);    
\fill (y) circle (1.5pt);
\node[left] at (x) {$x$};    
\node[left] at (y) {$y$};
\draw (x) -- (y);
\draw (x) -- (Ka);    
\draw (x) -- (Kb);
\draw (y) -- (Kc);
\draw (Ka) -- (Kb);    
\draw (Kb) -- (Kc);
\draw (Ka) -- (Kd);    
\draw (Kb) -- (Kd);    
\draw (Kc) -- (Kd);
\end{tikzpicture}
    \caption{A schematic of the graph $G$.}
    \label{fig:enter-label}
\end{figure}

We next estimate the Lin--Lu--Yau curvature $\kappa_{LLY}(x,y)$ of $xy$.
Consider the function $f:N_V(x)\cup N_V(y)\rightarrow\mathbb{Z}$ given by
$$f(z)= 
\begin{cases}
-1, & \text { if } z\in A;\\
0, & \text { if } z\in \{x\} \cup B \cup D; \\
1, & \text { if } z\in \{y\} \cup C.
\end{cases}$$
Then, $f(y)-f(x)=1$ and $f\in Lip(1)$.
By Theorem~\ref{Curvature via the Laplacian}, we have 
\[\kappa_{LLY}(x,y)\leq \frac{1-a}{a+b+1}+\frac{1}{c+1}.\]

Choose $a=\lceil\frac{n}{2}\rceil$, $b= \lfloor\frac{n}{2}\rfloor-3$, $c=1$, $d=0$. 
Then, we have $\kappa_{LLY}(x,y)\leq 0$. 
Then every cycle in $\overline{G}$ has length $4$. 
This shows that Theorem~\ref{main} cannot be generalized by forbidding cycles of length $k$ in the complement graph for any $k=3$ or $k>4$. 
Moreover, the complement of $G$ contains no $K_{s,t}$ with $s>2$ and $t> 2$. 
Therefore, Theorem~\ref{main2} cannot be generalized by forbidding $K_{s,t}$ for $s>2$ and $t> 2$.

For fixed $t$, choose $a=c=t-2$, $b=n-3t+3$, $d=t-1$. 
Then we have $\kappa_{LLY}(x,y)\leq 0$ when $n\leq t^2-2t+1$.
Moreover, the complement of $G$ contains no $K_{2,t}$.
This shows that the bound $n\ge \max\{t^2-2t +2,8t\}$ in Theorem~\ref{main2} is sharp for $t\geq 10$.


\section*{Acknowledgement}
\noindent This work is supported by the National Key R \& D Program of China 2023YFA1010200. K.C.'s research is supported by the National Natural Science Foundation of China No. 125B1009 and the New Lotus Scholars Program PB22000259. S.L.'s research is supported by the Scientific Research Innovation Capability Support Project for Young Faculty and the National Natural Science Foundation of China No. 12431004.

\end{document}